\documentclass[12pt,reqno]{amsart}
\numberwithin{equation}{section}
\usepackage[T1]{fontenc}
\usepackage{lmodern}
\usepackage{amsmath,amssymb,mathtools}
\usepackage[margin=1.08in]{geometry}
\usepackage{microtype}
\usepackage{enumitem}
\usepackage{needspace}
\usepackage{float}
\usepackage{tikz-cd}
\usetikzlibrary{arrows}
\usepackage{xcolor}
\usepackage[colorlinks=true,linkcolor=blue!55!black,citecolor=blue!55!black,urlcolor=blue!55!black]{hyperref}
\hypersetup{pdftitle={Wide subcategories over commutative coherent rings},pdfauthor={Tiwei Zhao}}
\newtheorem{theorem}{Theorem}[section]
\newtheorem{proposition}[theorem]{Proposition}
\newtheorem{lemma}[theorem]{Lemma}
\newtheorem{corollary}[theorem]{Corollary}
\newtheorem*{conjecture}{Conjecture}
\theoremstyle{definition}

\newtheorem{example}[theorem]{Example}
\theoremstyle{remark}
\newtheorem{remark}[theorem]{Remark}
\newcommand{\Mod}{\operatorname{Mod}}
\newcommand{\modcat}{\operatorname{mod}}
\newcommand{\Dperf}{D_{\mathrm{perf}}}
\newcommand{\Spec}{\operatorname{Spec}}
\newcommand{\Supp}{\operatorname{Supp}}
\newcommand{\supp}{\operatorname{supp}}
\newcommand{\Ann}{\operatorname{Ann}}
\newcommand{\Ass}{\operatorname{Ass}}
\newcommand{\Hom}{\operatorname{Hom}}

\newcommand{\coker}{\operatorname{coker}}
\newcommand{\im}{\operatorname{im}}
\newcommand{\wide}{\operatorname{wide}}
\newcommand{\thick}{\operatorname{thick}}
\newcommand{\Wide}{\operatorname{Wide}}
\newcommand{\Serre}{\operatorname{Serre}}
\newcommand{\Tors}{\operatorname{Tors}}
\newcommand{\tors}{\operatorname{tors}}
\newcommand{\gen}{\operatorname{gen}}
\newcommand{\Thick}{\operatorname{Thick}}
\newcommand{\Thom}{\operatorname{Thom}}
\newcommand{\Torsft}{\operatorname{Tors}_{\mathrm{ft}}}
\newcommand{\ab}{\operatorname{ab}}
\newcommand{\Filt}{\operatorname{Filt}}
\newcommand{\add}{\operatorname{add}}
\newcommand{\W}{\mathcal W}
\newcommand{\C}{\mathcal C}
\newcommand{\T}{\mathcal T}
\newcommand{\Z}{\mathbb Z}
\newcommand{\K}{\mathrm K}
\title[Wide subcategories over commutative coherent rings]{Wide subcategories over commutative coherent rings}

\author[T. Zhao]{Tiwei Zhao}
\address{School of Artificial Intelligence, Jianghan University,
Wuhan 430056, China}
\email{tiweizhao@jhun.edu.cn}

\date{}
\subjclass[2020]{13C05, 13D02, 18E10, 18G80}
\keywords{coherent ring, wide subcategory, Serre subcategory, perfect complex, Koszul complex, Thomason subset}
\begin{document}
\begin{abstract}
In this paper, we mainly prove Hovey's conjecture: for any commutative coherent ring, there is a lattice isomorphism between the lattice of wide subcategories of the category of finitely presented modules and that of thick subcategories of the perfect derived category.  We  construct, for every finitely presented module, a bounded finite free complex whose zeroth homology is the given module and whose homology lies in its abelian closure. This realization gives an inverse to Hovey's correspondence without a closure operation.  In addition, we study finite extension filtrations, their behavior under change of rings, and the limits of reconstructing wide subcategories of arbitrary modules from finitely presented modules.
\end{abstract}
\maketitle

\section{Introduction}


All rings are commutative with identity unless stated otherwise, and modules are unital. An $R$-module is \emph{finitely presented} if it admits an exact sequence $R^a\to R^b\to M\to0$ with $a,b$ finite. A ring is \emph{coherent} if each of its finitely generated ideals is finitely presented as a module \cite[Definition~4.45]{Lam}. Let $R$ be such a ring. Write $\Mod R$ for the category of $R$-modules, $\modcat R$ for its abelian subcategory of finitely presented modules, and $\operatorname{proj}R$ for the subcategory of finitely generated projective modules. A \emph{perfect complex} is a complex quasi-isomorphic to a bounded complex in $\operatorname{proj}R$; following \cite[Section~2]{Takahashi}, we write $\Dperf(R)$ for the full subcategory of the derived category of $R$-modules consisting of perfect complexes, and identify it with the bounded homotopy category $\K^b(\operatorname{proj}R)$. Complexes are homologically indexed: $H_i$ denotes homology in degree $i$, and $X[n]_i=X_{i-n}$ with differential $(-1)^n d_X$. We regard a module as a complex in degree zero and write $\otimes_R^{\mathbf L}$ for the derived tensor product. All subcategories are nonempty, full and closed under isomorphisms; classes of objects include all isomorphic objects.

In an abelian category, an \emph{abelian subcategory} is closed under finite direct sums, kernels and cokernels, so its inclusion is exact. It is \emph{wide} if it is also closed under extensions, and \emph{Serre} if it is closed under subobjects, quotients and extensions; compare \cite[Definition~1.1]{Hovey} and \cite[Definition~2.3(1),(2)]{Takahashi}. A \emph{torsion class} in $\modcat R$ is closed under quotients and extensions \cite[Definition~3.1(2)]{IK}; this does not presuppose a torsion pair. Takahashi uses this term for hereditary torsion classes in $\Mod R$ \cite[Definition~2.3(3)]{Takahashi}, which we define below. Unless another ambient category is specified, these closure conditions concern objects of $\modcat R$. Write $\Wide(\modcat R)$, $\Serre(\modcat R)$ and $\Tors(\modcat R)$ for the respective lattices, ordered by inclusion. A \emph{thick subcategory} in $\Dperf(R)$ is a triangulated subcategory closed under direct summands \cite[Definition~2.10(1)]{Takahashi}; their lattice in $\Dperf(R)$ is denoted by $\Thick(\Dperf(R))$.

We write $\Spec R$ for the prime spectrum of $R$. For a module $M$ and a perfect complex $P$, we use ordinary localization supports
\[
 \Supp_R M=\{\mathfrak p\in\Spec R:M_{\mathfrak p}\ne0\},\quad
 \Supp_R P=\{\mathfrak p\in\Spec R:P_{\mathfrak p}\not\simeq0\}.
\]
For a module or complex subcategory $\mathcal E$, we set
$\Supp_R(\mathcal E)=\bigcup_{E\in\mathcal E}\Supp_R E$.
For an ideal $I$, we put $V(I)=\{\mathfrak p\in\Spec R:I\subseteq\mathfrak p\}$. A subset $V\subseteq\Spec R$ is \emph{specialization-closed} if $\mathfrak p\in V$ and $\mathfrak p\subseteq\mathfrak q$ imply $\mathfrak q\in V$. A \emph{Thomason subset} is a union of sets $V(I)$ with $I$ finitely generated, equivalently a union of closed subsets with quasi-compact open complements \cite[Section~2]{GPtorsion}. Their complete lattice under inclusion is denoted by $\Thom(\Spec R)$.

The classification of thick subcategories of perfect complexes by support originates in the work of Hopkins \cite{Hopkins}. For commutative noetherian rings, the Hopkins--Neeman theorem identifies these subcategories with specialization-closed subsets of the prime spectrum \cite[Theorem~1.5]{Neeman}. Thomason extended this classification to arbitrary commutative rings, with Thomason subsets replacing specialization-closed subsets \cite[Theorem~3.15]{Thomason}. Motivated by this classification, Hovey \cite{Hovey} introduced the homology map
\begin{equation}\label{eq:f}
 \begin{aligned}
 f_R:\Wide(\modcat R)&\longrightarrow\Thick(\Dperf(R)),\\
 \W&\longmapsto\{P\in\Dperf(R):H_i(P)\in\W\text{ for every }i\}.
 \end{aligned}
\end{equation}

For the classification diagram below, we write $\Torsft(\Mod R)$ for the lattice of hereditary torsion classes of finite type. A \emph{hereditary torsion class} $\mathcal U\subseteq\Mod R$ is closed under submodules, quotients, extensions and arbitrary direct sums. It is \emph{of finite type} if its torsion-free class
\[
 \mathcal U^\perp=\{Y\in\Mod R:\Hom_R(X,Y)=0\text{ for all }X\in\mathcal U\}
\]
is closed under filtered colimits; see \cite[Appendix]{GPtorsion}. The following Figure~\ref{fig:classification-overview} collects the classification correspondences used in this paper and the known noetherian closure results. 
\begin{figure}[htbp]
\begin{tikzpicture}[
 >=latex,
 category/.style={inner sep=4pt,font=\small},
 arrowlabel/.style={font=\scriptsize,align=center,inner sep=2pt}
]
 \node[category] (wide) at (0,0) {$\Wide(\modcat R)$};
 \node[category] (perfect) at (7.2,0) {$\Thick(\Dperf(R))$};
 \node[category] (serre) at (0,-3.2) {$\Serre(\modcat R)$};
 \node[category] (support) at (7.2,-3.2) {$\Thom(\Spec R)$};
 \node[category] (torsion) at (0,-6.2) {$\Tors(\modcat R)$};
 \node[category] (large) at (7.2,-6.2) {$\Torsft(\Mod R)$};

 \draw[->] (wide) -- node[arrowlabel,above,yshift=4pt]
   {$f_R$, $\cong$ in the following cases\\
    Hovey \cite[Theorem~4.6]{Hovey}: $R=A/J$\\
    $A$ regular coherent, $J$ finitely generated\\
    Takahashi \cite[Theorem~3.6]{Takahashi}: $R$ noetherian} (perfect);
 \draw[double] (serre) -- node[arrowlabel,left]
   {$R$ noetherian\\Takahashi \cite[Theorem~A]{Takahashi}} (wide);
 \draw[double] (serre) -- node[arrowlabel,left]
   {$R$ noetherian\\Stanley--Wang \cite[Corollary~7.1]{SW}} (torsion);
 \draw[->] (serre) -- node[arrowlabel,below,pos=.68]
   {$\Supp_R$, $\cong$\\Garkusha--Prest \cite[Theorem~B]{GP}\\$R$ coherent} (support);
 \draw[->] (serre) -- node[arrowlabel,sloped,above,pos=.4]
   {$f_R$, $\cong$\\Garkusha--Prest \cite[Theorem~C]{GP}\\$R$ coherent} (perfect);
 \draw[->] (serre) -- node[arrowlabel,sloped,below,pos=.55]
   {filtered-colimit closure, $\cong$\\Garkusha--Prest \cite[Corollary~2.4]{GP}\\$R$ coherent} (large);
 \draw[->] (perfect) -- node[arrowlabel,right]
   {$\Supp_R$, $\cong$\\
    Hopkins--Neeman\\
    \cite{Hopkins}, \cite[Theorem~1.5]{Neeman}\\
    $R$ noetherian\\[2pt]
    Thomason\\\cite[Theorem~3.15]{Thomason}\\arbitrary commutative $R$} (support);
 \draw[->] (large) -- node[arrowlabel,right]
   {$\Supp_R$, $\cong$\\Garkusha--Prest\\\cite[Theorem~2.2]{GPtorsion}\\arbitrary commutative $R$} (support);
\end{tikzpicture}
\caption{Classification correspondences for commutative rings. Arrows marked $\cong$ are complete lattice isomorphisms. }
\label{fig:classification-overview}
\end{figure}

There are other related works, such as noetherian closure criteria \cite{IMST}, intervals in torsion lattices of length categories \cite{AP}, and radical wide subcategories relative to a chosen family \cite{TWZ}. In Figure~\ref{fig:classification-overview}, a coherent ring is \emph{regular} if every finitely presented module has finite projective dimension \cite[Section~3]{Hovey}.
 In the discussion after \cite[Corollary~4.7]{Hovey}, Hovey proposed extending bijectivity to every commutative coherent ring. Takahashi records the following conjecture in the introduction of \cite{Takahashi}, where the term \emph{coherent subcategory} means a wide subcategory here.

\begin{conjecture}[Hovey; {\cite[Introduction]{Takahashi}}]
For a commutative coherent ring $R$, the lattices below are isomorphic:
\[
 \left\{\begin{array}{c}
 \text{thick subcategories}\\
 \text{of }\Dperf(R)
 \end{array}\right\}
 \ \cong\ 
 \left\{\begin{array}{c}
 \text{coherent(=wide) subcategories}\\
 \text{of }\modcat R
 \end{array}\right\}.
\]
\end{conjecture}

\Needspace{20\baselineskip}
In this paper, we will prove that every wide subcategory of $\modcat R$ is Serre over every commutative coherent ring $R$. Combined with the Serre/perfect-complex correspondence of Garkusha and Prest \cite[Theorem~C]{GP}, this proves the conjecture. We also give a construction that identifies the inverse of $f_R$ without taking a closure. The main results are as follows:

\begin{theorem}\label{thm:main}
Let $R$ be a commutative coherent ring.
\begin{enumerate}[label=\textup{(\arabic*)},leftmargin=*]
\item There are equalities and a natural isomorphism of complete lattices
\[
 \Wide(\modcat R)=\Serre(\modcat R)=\Tors(\modcat R)
 \cong\Torsft(\Mod R),
\]
where the last isomorphism sends $\W$ to the class of all filtered colimits of its objects in $\Mod R$, and its inverse sends $\mathcal U$ to $\mathcal U\cap\modcat R$.
\item The map $f_R$ is an isomorphism of complete lattices, with inverse
\begin{equation}\label{eq:inverse}
 g_R(\T)=\{H_0(P):P\in\T\}.
\end{equation}
\item The following diagram of complete lattice isomorphisms commutes:
\begin{equation}\label{eq:main-lattices}
 \begin{tikzcd}[column sep=small,row sep=small]
 \Wide(\modcat R)
   \arrow[rr,shift left=.7ex,"f_R"]
   \arrow[dr,"\Supp_R"']
 && \Thick(\Dperf(R))
   \arrow[ll,shift left=.7ex,"g_R"]
   \arrow[dl,"\Supp_R"]
 \\
 & \Thom(\Spec R) &
 \end{tikzcd}
\end{equation}
\end{enumerate}
\end{theorem}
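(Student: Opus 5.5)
The argument reduces everything to one claim: every wide subcategory $\W\subseteq\modcat R$ is closed under quotients, and hence (being closed under kernels and extensions) under submodules as well. Granting this, $\Wide(\modcat R)=\Serre(\modcat R)$. Every Serre subcategory is a torsion class. Conversely, a torsion class in $\modcat R$ is closed under finitely presented submodules via the support description, using Garkusha--Prest \cite[Theorem~B]{GP}. The argument is as follows. If $\mathcal C$ is a torsion class and $M\in\mathcal C$, then $\mathcal C$ contains $M/IM\cong M\otimes R/I$ and all quotients of $M^n$. For $N\in\modcat R$ with $\Supp N\subseteq\Supp M$, Thomason-type arguments (finite generation of $\Ann$ up to radical for finitely presented modules) show that $N$ has a finite filtration by quotients of $M^n$. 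Hence $\mathcal C$ is determined by its support and is Serre. The isomorphism with $\Torsft(\Mod R)$ in part (1) is then \cite[Corollary~2.4]{GP}.

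To prove the key claim, let $M\in\W$ and $x\in R$. I would first show $M/xM\in\W$, and more generally $M/IM\in\W$ for finitely generated ideals $I$. This is where the paper's construction enters: realize $M$ as $H_0$ of a bounded finite free complex $P$ whose homology lies in $\ab(M)\subseteq\W$. Tensoring with the Koszul complex $K(x)$ gives a long exact sequence
\[
H_i(P)\xrightarrow{x}H_i(P)\to H_i(P\otimes K(x))\to H_{i-1}(P)\xrightarrow{x}H_{i-1}(P).
\]
Since $\W$ is closed under kernels, cokernels and extensions, each $H_i(P\otimes K(x))\in\W$, so $P\otimes K(x)\in f_R(\W)$. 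The same argument shows $f_R(\W)$ is thick.

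Next I compare with Thomason's classification \cite[Theorem~3.15]{Thomason}. Every perfect complex $Q$ with $\Supp Q\subseteq\Supp P$ lies in $\thick(P)$. A finitely presented $N$ with $\Supp N\subseteq\Supp M$ is $H_0$ of a finite free resolution; by coherence this resolution can be truncated to a perfect complex only after a construction. So I would build, for each $N$ in $\operatorname{Serre}(M)$, a perfect complex $Q$ with $H_0(Q)=N$ whose homology lies in $\ab(N)$. Then $Q\in\thick(P)\subseteq f_R(\W)$, giving $N=H_0(Q)\in\W$.

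\textbf{Main obstacle.} The crux is the realization lemma. For finitely presented $M$ we need a bounded complex of finite free modules $P$ with $H_0(P)=M$ and all $H_i(P)\in\ab(M)$. I would construct it inductively. Start from a presentation $F_1\to F_0\to M$; its kernel is finitely presented by coherence but need not lie in $\ab(M)$. To fix this, kill generators using Koszul-type pieces built from entries of the presentation matrix, so that new homology is computed from kernels and cokernels of endomorphisms $x\colon M\to M$ and of maps among already-obtained homology. I would try to use the Fitting/annihilator ideal $I=(a_1,\dots,a_n)$, with $\Supp M=V(I)$ up to radical, so that the Koszul complex $K(a)$ tensored with a presentation has homology built from $M$. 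If this fails, the fallback is the weaker statement that $\thick(P)$ contains some perfect complex with $H_0=N$ and the remaining homology in $\W$. Iterating on degree then suffices.

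Once the key claim holds, part (2) follows. We have $f_R=$ the Garkusha--Prest isomorphism \cite[Theorem~C]{GP} restricted to $\Serre(\modcat R)=\Wide(\modcat R)$. Then $g_R(f_R(\W))\supseteq\W$ via modules in degree zero, which are perfect by the realization $P$. Also $g_R(f_R(\W))\subseteq\W$ by definition. Hence $g_R=f_R^{-1}$.

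Part (3) is commutativity of supports: $\Supp f_R(\W)=\bigcup\Supp H_i(P)=\Supp\W$, using $\Supp P=\bigcup_i\Supp H_i(P)$ for perfect complexes. This completes the diagram with Thomason's and Garkusha--Prest's isomorphisms.
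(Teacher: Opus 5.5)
Your top-level reduction (wide $=$ Serre and torsion $=$ Serre, then Garkusha--Prest and Thomason for parts (2) and (3)) matches the paper's. However, both reductions rest on steps you have not proved, and the wide case is circular as you have organized it.

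\textbf{Torsion classes.} The assertion that every $N$ with $\Supp_R N\subseteq\Supp_R M$ has a finite filtration by quotients of powers of $M$ is the entire content of the claim; it does not follow from ``Thomason-type arguments''. Finite generation of $\Ann_R M$ only gives $(\Ann_R M)^nN=0$. So $N$ is filtered by quotients of $(R/\Ann_R M)^k$, and what is missing is $R/\Ann_R M\in\tors_R(M)$. This is not automatic, because $R/\Ann_R M$ need not lie in $\gen_R(M)$: over $R=k[x,y,z_1,z_2,\ldots]$ the module $M=(x,y)$ has $\Ann_R M=0$ but $R\notin\gen_R(M)$. Even over a noetherian ring this step is the Stanley--Wang theorem. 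The paper obtains it over a coherent ring in four moves:
\begin{enumerate}
\item descend the presentation matrix of $M$ to the finitely generated $\mathbb Z$-subalgebra $R_0$ generated by its entries;
\item note that $\{L\in\modcat R_0:R\otimes_{R_0}L\in\T\}$ is a torsion class in $\modcat R_0$;
\item apply Stanley--Wang there to get $R_0/\Ann_{R_0}M_0$ into it;
\item base change and pass to the quotient $R/\Ann_R M$.
\end{enumerate}

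\textbf{Wide subcategories.} Your argument needs a perfect complex $P$ with $\Supp_R P\supseteq\Supp_R M$ and all homology in $\W$. The realization lemma is left as a plan. The construction you suggest (a presentation tensored with the Koszul complex on generators of $\Ann_R M$) only shows, via the Koszul homotopy, that the homology is annihilated by $\Ann_R M$. To conclude that this homology lies in $\ab_R(M)\subseteq\W$, you need every finitely presented module killed by $\Ann_R M$ to lie in $\ab_R(M)$, equivalently $R/\Ann_R M\in\ab_R(M)$. But that already places every finitely generated submodule and quotient of $M$ in $\W$, which is exactly what you set out to prove.

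The missing idea is the elementary cyclic descent. Suppose $A\subseteq B$ with $B\in\W$ and $B/A\cong R/I$, where $I=(a_1,\ldots,a_r)$.
\begin{itemize}
\item $B/IB=\coker(B^r\xrightarrow{(a_1,\ldots,a_r)}B)\in\W$.
\item The induced surjection $B/IB\to R/I$ splits by $x+I\mapsto xb+IB$, for any $b$ mapping to $1+I$.
\item Closure under kernels gives closure under summands, so $R/I\in\W$, and then $A=\ker(B\to R/I)\in\W$.
\end{itemize}
Descending along the chain $N\subseteq N+Rx_1\subseteq\cdots\subseteq M$ gives closure under finitely generated submodules and quotients. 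This uses neither perfect complexes nor Thomason's classification. In the paper, the realization $X_M$ is proved afterwards using this fact, and it serves only for injectivity of $f_R$ and the closure-free inverse $g_R$.
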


Once part~(1) is known, the lattice isomorphism in part~(2) and the support diagram in part~(3) follow from Garkusha and Prest \cite[Theorems~B and~C]{GP}. Their inverse sends $\T$ to the Serre subcategory generated by $\{H_i(P):P\in\T,\ i\in\Z\}$. Formula~\eqref{eq:inverse} strengthens this description by removing the closure operation.

In Section~\ref{sec:finite}, we prove the closure results by finite cyclic descent and reduction to a noetherian subring, extending the results of Takahashi \cite[Theorem~3.1]{Takahashi} and Stanley--Wang \cite[Corollary~7.1]{SW}; we also give the ideal-filter classification and annihilator-power formulas. In Section~\ref{sec:perfect}, we construct perfect complexes with prescribed zeroth homology and all homology in the module's abelian closure. Combined with Hovey's surjectivity theorem, this yields the inverse~\eqref{eq:inverse} and completes the proof of Theorem~\ref{thm:main}. In Section~\ref{sec:basechange}, we study change of rings and compare quotient-and-extension filtration lengths through iterated traces. In Section~\ref{sec:large}, we  apply the Garkusha--Prest correspondence \cite[Corollary~2.4]{GP} to identify the largest finite-type hereditary torsion class contained in a wide subcategory closed under arbitrary sums, and describe restriction to finitely presented modules in Krause's classification \cite[Theorem~3.1]{Krause}. Appendix~\ref{app:associated-primes} treats the noetherian boundary for parametrization by associated primes. 

We also need the following notations in this paper. For a family $\mathcal S\subseteq\modcat R$, write $\ab_R(\mathcal S)$, $\wide_R(\mathcal S)$ and $\tors_R(\mathcal S)$ for the smallest abelian, wide and torsion subcategories containing $\mathcal S$. For $\mathcal E\subseteq\Dperf(R)$, write $\thick_R(\mathcal E)$ for its thick closure. We omit braces for singleton families. For any module $M$, put $\Ann_R M=\{r\in R:rM=0\}$ and $\Ann_R(x)=\{r\in R:rx=0\}$ for $x\in M$. Write $\kappa(\mathfrak p)=R_{\mathfrak p}/\mathfrak pR_{\mathfrak p}$ for the residue field at $\mathfrak p$, and $\Ass_R M$ for the ordinary associated primes, namely the prime ideals of the form $\Ann_R(x)$ with $0\ne x\in M$ \cite[Definition~2.1]{ES}. 

\section{Abelian subcategories and torsion classes}\label{sec:finite}

The finite descent argument underlying the classification works over an arbitrary commutative ring.

\begin{proposition}\label{prop:finite-submodules}
Let $R$ be commutative and let $\C\subseteq\Mod R$ be a class of finitely presented modules. Assume that $\C$ is closed under finite direct sums, kernels and cokernels. If $M\in\C$ and $N\subseteq M$ is finitely generated, then $N,M/N\in\C$. In particular, $N$ is finitely presented.
\end{proposition}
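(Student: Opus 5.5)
The plan is to use only the operations that closure under finite sums, kernels and cokernels makes available inside $\C$, namely \emph{$R$-linear matrix maps} $M^a\to M^b$, and to reach an arbitrary finitely generated submodule by a finite descent. The first step records two families of modules that lie in $\C$ whenever $M\in\C$. Let $I=(a_1,\dots,a_n)$ be a finitely generated ideal. The map $M^n\to M$, $(y_i)\mapsto\sum a_iy_i$, has cokernel $M/IM$. The map $M\to M^n$, $z\mapsto(a_iz)$, has kernel $(0:_MI)$. Hence both modules lie in $\C$.

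The same argument applied to arbitrary matrices over $R$ shows that $\C$ contains the image, kernel and cokernel of every $R$-linear map $M^a\to M^b$. The images are $\sum_j r_{ij}M$-type submodules.

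The second step reduces the statement to cyclic submodules. Write $N=Rx_1+\dots+Rx_n$ and argue by induction on $n$. Suppose we know, for every $L\in\C$ and every $x\in L$, that $Rx\in\C$ and $L/Rx\in\C$. Apply this to $x_1\in M$. Then $M/Rx_1\in\C$, and the image of $N$ in $M/Rx_1$ is generated by $n-1$ elements. By induction, $N/Rx_1\in\C$ and $M/N\in\C$. Finally, $N$ is the kernel of $M\to M/N$, and this map is a morphism in $\C$ because $\C$ is closed under cokernels and the quotient map is the cokernel of $Rx_1\to M$ followed by $M/Rx_1\to M/N$. Hence $N\in\C$. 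Finite presentation of $N$ is then automatic, since $\C$ consists of finitely presented modules.

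The third step treats the cyclic case, and I expect it to be the main obstacle. A single element $x$ is not the image of any endomorphism of $M^a$ built from scalars. So one has to descend on the number $k$ of generators $m_1,\dots,m_k$ of $M$, using the first step.

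The idea I would try first is as follows. Write $x=\sum r_jm_j$ and consider the map
\[
R\to M^k,\qquad 1\mapsto(m_1,\dots,m_k),
\]
whose image is $R/\Ann_RM$. I would try to realize the relevant cyclic pieces as kernels of maps $M^k\to M^{k'}$ given by a finite presentation matrix of $M$. Since $M$ is finitely presented, the relations among the $m_j$ form a finite matrix $A$. The submodule of $M^k$ consisting of tuples satisfying these relations, namely the kernel of $A^{T}\colon M^k\to M^{k'}$, is the module $\Hom_R(M,M)$. This module lies in $\C$ and contains the cyclic module $R\cdot\mathrm{id}$.

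Next, I would cut down by kernels and cokernels of multiplication by the finitely many $r_j$ and by finitely generated annihilator-type ideals, as in the first step. The aim is to isolate $Rx\cong R/\Ann_R(x)$ as an iterated subquotient obtained through operations available in $\C$. The hope is to induct on $k$: pass to $M/Rm_k$, which has fewer generators, and then glue using the short exact sequence
\[
0\to Rm_k\to M\to M/Rm_k\to0 .
\]
If this gluing fails, the fallback is to reduce to a finitely generated (hence noetherian) subring $R_0\subseteq R$ over which $M$, $x$ and the relevant maps are defined. There the submodule lattice is noetherian, and I would run the descent along annihilator chains.
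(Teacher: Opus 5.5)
\textbf{There is a genuine gap.} Your Steps 1 and 2 are sound. In Step 2 you need no argument that $M\to M/N$ is a morphism of $\C$: $\C$ is a full subcategory, so every $R$-linear map between its objects is one. Step 3, however, is the heart of the proposition, and you correctly flag it as the main obstacle. It lists strategies without carrying any of them out, and the ones you name do not work as stated.
\begin{itemize}
\item Passing to $M/Rm_k$ presupposes $M/Rm_k\in\C$, which is exactly the cyclic case you are trying to prove.
\item ``Gluing'' along $0\to Rm_k\to M\to M/Rm_k\to 0$ would need closure under extensions, which $\C$ is not assumed to have.
\item Knowing $\Hom_R(M,M)\in\C$ and that it contains $R\cdot\mathrm{id}\cong R/\Ann_R M$ does not put that cyclic submodule into $\C$. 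Closure under submodules is what is being proved.
\item The noetherian fallback fails. The class $\{L\in\modcat R_0: R\otimes_{R_0}L\in\C\}$ inherits closure under finite sums and cokernels from right exactness of $R\otimes_{R_0}-$. It does not inherit closure under kernels, since $R$ need not be flat over $R_0$. This is why the paper uses that descent only for torsion classes, in Theorem~\ref{thm:torsion-serre}.
\end{itemize}

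\textbf{The missing idea.} Peel off cyclic \emph{quotients} from the top rather than cyclic submodules from the bottom, and use a splitting.
\begin{enumerate}
\item Let $B\in\C$ and let $A\subseteq B$ be finitely generated with $B/A\cong R/I$. Since $B$ is finitely presented and $A$ is finitely generated, $R/I$ is finitely presented. Hence $I=(a_1,\dots,a_r)$ is finitely generated, and your Step 1 gives $B/IB\in\C$.
\item Since $IB\subseteq A$, the quotient map $q\colon B\to R/I$ factors through a surjection $\bar q\colon B/IB\to R/I$. This surjection splits: if $q(b)=1+I$, then $r+I\mapsto rb+IB$ is a section.
\item So $R/I$ is a direct summand of $B/IB$, i.e.\ the kernel of an idempotent endomorphism. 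Hence $R/I\in\C$, and then $A=\ker q\in\C$.
\item Lift generators $x_1,\dots,x_t$ of $M/N$ and descend along $N\subseteq N+Rx_1\subseteq\cdots\subseteq N+Rx_1+\cdots+Rx_t=M$, where each step has cyclic quotient. This gives $N\in\C$ directly, and then $M/N=\coker(N\hookrightarrow M)\in\C$.
\end{enumerate}
This makes your Step 2 reduction unnecessary. It also shows that the cyclic-submodule case $Rx\in\C$ is not genuinely easier: it is obtained only by the same descent from $M$.
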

\begin{proof}
First let $B\in\C$ and consider a finitely generated submodule $A$ of $B$ with cyclic quotient $B/A\cong R/I$. By \cite[Lemma~4.54(1)]{Lam}, $R/I$ is finitely presented, and by \cite[Proposition~4.26(b)]{Lam}, $I$ is finitely generated. Write $I=(a_1,\ldots,a_r)$. Since $R$ is commutative, the scalar map $(a_1,\ldots,a_r): B^r\to B$ is $R$-linear, and closure under cokernels gives
\begin{equation}\label{eq:scalar-cokernel}
 B/IB=\coker\left(B^r\xrightarrow{(a_1,\ldots,a_r)}B\right)\in\C.
\end{equation}
Since $IB\subseteq A$, the quotient $q:B\twoheadrightarrow R/I$ factors through an $R/I$-linear surjection $\overline q:B/IB\twoheadrightarrow R/I$. This surjection splits: if $q(b)=1+I$, then
$s(x+I)=xb+IB$ defines a section. Consequently $R/I$ is a direct summand of $B/IB$. Since closure under kernels implies closure under direct summands, $R/I$ belongs to $\C$. Closure under kernels then gives $A=\ker q\in\C$.

Now let $M\in\C$ and $N\subseteq M$ be finitely generated. Lift a finite generating set $x_1,\ldots,x_t$ of $M/N$ and put
\begin{align*}
N_0&=N\\
N_1&=N+Rx_1\\
&\cdots \\
N_t&=N+Rx_1+Rx_2+\cdots+Rx_t=M.
\end{align*}
Then each $N_i$ is finitely generated and $N_i/N_{i-1}$ is cyclic. Descending from $N_t=M\in\C$, the preceding argument gives $N_{i-1}\in\C$ at every step. Finally, we obtain $N\in\C$, and the cokernel of $N\hookrightarrow M$ gives $M/N\in\C$.
\end{proof}

From now on $R$ is a commutative coherent ring. Then $\modcat R$ is an abelian category \cite[Lemma~1.6 and its proof]{Hovey}; see also \cite[Sections~2.2--2.3]{Glaz}. Applying Proposition~\ref{prop:finite-submodules} to $\C=\modcat R$ shows that finitely generated submodules of finitely presented modules are finitely presented. In particular, if $M\in\modcat R$ is generated by $m_1,\ldots,m_s$, its annihilator is finitely generated, since
\begin{equation}\label{eq:ann-kernel}
 \Ann_R M=\ker\bigl(R\longrightarrow M^s,\ a\longmapsto(am_1,\ldots,am_s)\bigr).
\end{equation}
Proposition~\ref{prop:finite-submodules} also shows that every abelian subcategory of $\modcat R$ is closed under subobjects and quotients, and hence every wide subcategory is Serre. We now classify these abelian subcategories and describe their extension closures.

We record the elementary restriction-of-scalars fact used in the annihilator arguments; compare \cite[Lemma~4.54(1)]{Lam}.

\Needspace{8\baselineskip}
\begin{lemma}\label{lem:quotient-modules}
Let $I$ be a finitely generated ideal of $R$. Restriction of scalars identifies $\modcat(R/I)$ with the abelian subcategory
\[
 \mathcal A_I=\{N\in\modcat R:IN=0\}.
\]
If a full subcategory $\C\subseteq\modcat R$ contains $R/I$ and is closed under finite direct sums and cokernels, then $\mathcal A_I\subseteq\C$.
\end{lemma}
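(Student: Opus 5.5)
The plan has two parts: first the identification of $\modcat(R/I)$ with $\mathcal A_I$, then the containment $\mathcal A_I\subseteq\C$ via a presentation over $R/I$.

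\emph{Identification.} Restriction of scalars along $R\to R/I$ is fully faithful from $\Mod(R/I)$ onto the $R$-modules killed by $I$: any $R$-linear map between such modules is automatically $R/I$-linear. So it suffices to show that, for an $R$-module $N$ with $IN=0$, being finitely presented over $R$ is equivalent to being finitely presented over $R/I$.

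If $N$ has an $R/I$-presentation $(R/I)^a\to(R/I)^b\to N\to0$, I will use that $R/I$ is finitely presented over $R$, since $I$ is finitely generated. Pulling back along $R^b\to(R/I)^b$, the kernel of $R^b\to N$ is generated by $I^{\oplus b}$ together with lifts of the $a$ relations. This is finitely generated, so $N\in\modcat R$.

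Conversely, if $R^a\to R^b\to N\to0$ is exact, tensoring with $R/I$ is right exact and $N\otimes_R R/I=N/IN=N$. This gives an $R/I$-presentation $(R/I)^a\to(R/I)^b\to N\to 0$.

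Hence the image of restriction is exactly $\mathcal A_I$. Moreover, $\mathcal A_I$ is closed in $\modcat R$ under finite direct sums, kernels and cokernels, because these are computed on underlying modules and stay killed by $I$. So it is an abelian subcategory with exact inclusion.

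\emph{Containment.} Let $N\in\mathcal A_I$. By the identification, $N$ has a presentation $(R/I)^a\xrightarrow{\phi}(R/I)^b\to N\to 0$ over $R/I$, and $\phi$ is also $R$-linear. Since $\C$ contains $R/I$ and is closed under finite direct sums, both $(R/I)^a$ and $(R/I)^b$ lie in $\C$. Then $N\cong\coker\phi$ lies in $\C$ by closure under cokernels.

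The only point needing care is the forward direction of the identification, namely that an $R/I$-finitely presented module is $R$-finitely presented. This is exactly where finite generation of $I$ is used, via \cite[Lemma~4.54(1)]{Lam}. Everything else is formal.
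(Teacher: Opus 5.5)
Your proof is correct and follows the paper's argument essentially step for step. You tensor an $R$-presentation with $R/I$, lift an $R/I$-presentation using finite generation of $I$ (computing the kernel $I^{\oplus b}+\langle\text{lifts}\rangle$ directly where the paper cites \cite[Lemma~4.54(1)]{Lam}), and finally take the cokernel of $(R/I)^a\to(R/I)^b$ inside $\C$. The one point left implicit is that kernels taken in $\modcat R$ agree with module kernels, which holds because $R$ is coherent; the paper states this explicitly.
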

\begin{proof}
Let $N$ be finitely presented over $R$ and satisfy $IN=0$. Tensoring a finite presentation of $N$ with $R/I$ gives
\[
 (R/I)^a\longrightarrow(R/I)^b\longrightarrow N\longrightarrow0,
\]
so $N$ is finitely presented over $R/I$. Conversely, a finite presentation over $R/I$ expresses $N$ as the quotient of $(R/I)^b$ by a finitely generated $R$-submodule. Since $I$ is finitely generated, $(R/I)^b$ is finitely presented over $R$; hence so is $N$ by \cite[Lemma~4.54(1)]{Lam}.

Every $R$-linear map between modules killed by $I$ is $R/I$-linear, so restriction of scalars is fully faithful with the stated image. Finite direct sums, kernels and cokernels of such modules are still killed by $I$ and are finitely presented by coherence. Thus $\mathcal A_I$ is abelian. Finally, if $R/I\in\C$, the displayed presentation and closure under finite direct sums and cokernels give $N\in\C$ for every $N\in\mathcal A_I$.
\end{proof}

Let $\mathcal I_{\mathrm{fg}}(R)$ be the partially ordered set of finitely generated ideals. It is closed under finite sums, intersections and products. A \emph{filter} in $\mathcal I_{\mathrm{fg}}(R)$ is a family $\mathcal F$ containing $R$, upward closed among finitely generated ideals, and closed under finite intersections. Here upward closure among finitely generated ideals means that if $I\in\mathcal F$ and $J$ is a finitely generated ideal with $I\subseteq J$, then $J\in\mathcal F$. A filter is \emph{multiplicative} if it is closed under ideal products.

The following theorem is the finitely presented form of the classical correspondence between ideal filters and subcategories closed under submodules, quotients and sums; see \cite[Corollary~9.4]{Kanda} for the correspondence in $\Mod R$. Proposition~\ref{prop:finite-submodules} makes it applicable here to every abelian subcategory, and the finite-generation assumptions give the stated elementary criterion for extension closure.

\begin{theorem}\label{thm:filters}
The assignments
\begin{align*}
 \C&\longmapsto\mathcal F_{\C}
     =\{I\in\mathcal I_{\mathrm{fg}}(R):R/I\in\C\},\\
 \mathcal F&\longmapsto\C_{\mathcal F}
     =\{M\in\modcat R:\Ann_R M\in\mathcal F\}
\end{align*}
are mutually inverse order-preserving bijections between abelian subcategories of $\modcat R$ and filters in $\mathcal I_{\mathrm{fg}}(R)$. Under these bijections, the following conditions are equivalent:
\begin{enumerate}[label=\textup{(\arabic*)},leftmargin=*]
\item $\C$ is wide;
\item $\C$ is Serre;
\item $I,J\in\mathcal F_{\C}$ implies $IJ\in\mathcal F_{\C}$, that is, $\mathcal F_{\C}$ is multiplicative;
\item $I\in\mathcal F_{\C}$ implies $I^2\in\mathcal F_{\C}$.
\end{enumerate}
Thus extension closure is equivalent to the implication
\[
 R/I\in\C\ \Longrightarrow\ R/I^2\in\C
 \quad\text{for every finitely generated ideal }I.
\]
\end{theorem}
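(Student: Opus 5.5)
We first verify that both assignments land where claimed, then that they are mutually inverse, and finally the equivalences.

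\emph{$\mathcal F_{\C}$ is a filter.} Since $\C$ is nonempty and closed under kernels, it contains $0=R/R$, so $R\in\mathcal F_{\C}$. If $I\subseteq J$ are finitely generated and $R/I\in\C$, then $R/J$ is a quotient of $R/I$ and lies in $\C$ by Proposition~\ref{prop:finite-submodules}. For $I,J\in\mathcal F_{\C}$, the ideal $I\cap J$ is finitely generated by coherence, and $R/(I\cap J)$ embeds in $R/I\oplus R/J\in\C$ as a finitely generated submodule; Proposition~\ref{prop:finite-submodules} again gives $R/(I\cap J)\in\C$.

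\emph{$\C_{\mathcal F}$ is abelian.} By \eqref{eq:ann-kernel}, annihilators of finitely presented modules are finitely generated, so membership makes sense. We have $\Ann(M\oplus N)=\Ann M\cap\Ann N$. Kernels and cokernels of maps $M\to N$ are a submodule of $M$ and a quotient of $N$, and their annihilators contain $\Ann M$ and $\Ann N$ respectively; they are finitely presented by coherence. Upward closure then gives closure.

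\emph{The assignments are mutually inverse.} First, $\mathcal F_{\C_{\mathcal F}}=\mathcal F$ because $\Ann(R/I)=I$. For $\C_{\mathcal F_{\C}}=\C$, let $M\in\C$ with generators $m_1,\dots,m_s$. By \eqref{eq:ann-kernel}, $R/\Ann M$ embeds in $M^s$ as a cyclic submodule, so it lies in $\C$ by Proposition~\ref{prop:finite-submodules}. Conversely, if $R/\Ann M\in\C$, then Lemma~\ref{lem:quotient-modules} with $I=\Ann M$ (finitely generated) gives $M\in\mathcal A_I\subseteq\C$. Both maps are clearly order-preserving.

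\emph{Equivalences.} (1)$\Leftrightarrow$(2) holds because abelian subcategories are already closed under subobjects and quotients. (3)$\Rightarrow$(4) is trivial. For (4)$\Rightarrow$(3), note that $(I\cap J)^2\subseteq IJ$ and $I\cap J\in\mathcal F$; apply upward closure. For (1)$\Rightarrow$(4), use the exact sequence $0\to I/I^2\to R/I^2\to R/I\to0$. Here $I/I^2$ is finitely presented (a finitely generated submodule of the finitely presented $R/I^2$) and is killed by $I$, so $I/I^2\in\mathcal A_I\subseteq\C$ by Lemma~\ref{lem:quotient-modules}; extension closure then gives $R/I^2\in\C$.

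The main step is (3)$\Rightarrow$(1). Given $0\to L\to M\to N\to 0$ in $\modcat R$ with $L,N\in\C$, put $I=\Ann L$ and $J=\Ann N$. Then $JM\subseteq L$, so $IJ\,M=0$. Hence $IJ\subseteq\Ann M$, and $IJ$ is finitely generated and lies in $\mathcal F$ by multiplicativity. Upward closure gives $\Ann M\in\mathcal F$, i.e.\ $M\in\C$.

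The final displayed reformulation is the equivalence (1)$\Leftrightarrow$(4) read through the bijection. The only point needing care is that every ideal used ($I\cap J$, $IJ$, $I^2$, annihilators) is finitely generated, which holds by coherence and \eqref{eq:ann-kernel}.
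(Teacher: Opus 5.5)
Your proposal is correct and follows essentially the same route as the paper. You use the same filter verification via Proposition~\ref{prop:finite-submodules}, the same annihilator test $M\in\C\Leftrightarrow R/\Ann_R M\in\C$, and the same cycle (1)$\Rightarrow$(4)$\Rightarrow$(3)$\Rightarrow$(1), built from $0\to I/I^2\to R/I^2\to R/I\to0$, $(I\cap J)^2\subseteq IJ$ and $(\Ann_R L)(\Ann_R N)\subseteq\Ann_R M$. The only cosmetic difference is that you deduce $M\in\C$ from $R/\Ann_R M\in\C$ via Lemma~\ref{lem:quotient-modules}, whereas the paper uses the surjection $(R/\Ann_R M)^s\twoheadrightarrow M$ and closure under quotients.
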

\begin{proof}
Let $\C$ be an abelian subcategory of $\modcat R$. Since $R/R=0\in\C$, we have $R\in\mathcal F_{\C}$. If $I\in\mathcal F_{\C}$ and $I\subseteq J$ with $J$ finitely generated, the natural surjection $R/I\twoheadrightarrow R/J$ and Proposition~\ref{prop:finite-submodules} give $R/J\in\C$. Thus $\mathcal F_{\C}$ is upward closed among finitely generated ideals. If $I,J\in\mathcal F_{\C}$, then
\[
 R/(I\cap J)\hookrightarrow R/I\oplus R/J.
\]
The target belongs to $\C$, so Proposition~\ref{prop:finite-submodules} gives $R/(I\cap J)\in\C$. Hence $\mathcal F_{\C}$ is a filter.

Conversely, let $\mathcal F$ be a filter, let $M\in\C_{\mathcal F}$, and let $L\subseteq M$ be a subobject in $\modcat R$. Then
\[
 \Ann_R M\subseteq\Ann_R L,
 \quad \Ann_R M\subseteq\Ann_R(M/L).
\]
All three ideals are finitely generated by~\eqref{eq:ann-kernel}, so upward closure gives $L,M/L\in\C_{\mathcal F}$. Thus $\C_{\mathcal F}$ is closed under subobjects and quotients, and in particular under kernels and cokernels. It contains zero and is closed under finite direct sums because
$\Ann_R(M\oplus N)=\Ann_R M\cap\Ann_R N$. Therefore it is an abelian subcategory.

To show that the assignments are inverse, choose $s$ generators of $M\in\modcat R$. Equation~\eqref{eq:ann-kernel} gives an injection $R/\Ann_R M\hookrightarrow M^s$, and the chosen generators give a surjection $(R/\Ann_R M)^s\twoheadrightarrow M$. Proposition~\ref{prop:finite-submodules} therefore shows that every abelian subcategory $\C$ satisfies
\begin{equation}\label{eq:ann-test}
 M\in\C\quad\Longleftrightarrow\quad R/\Ann_R M\in\C.
\end{equation}
Consequently $\C_{\mathcal F_{\C}}=\C$. Since $\Ann_R(R/I)=I$, we also have $\mathcal F_{\C_{\mathcal F}}=\mathcal F$. Both assignments preserve inclusion.

We now prove the equivalences. Conditions~\textup{(1)} and~\textup{(2)} are equivalent by Proposition~\ref{prop:finite-submodules}.

\textup{(1)$\Rightarrow$(4).} Let $I\in\mathcal F_{\C}$. The module $I/I^2$ is finitely presented and annihilated by $I$, so Lemma~\ref{lem:quotient-modules} gives $I/I^2\in\C$. The exact sequence
\[
 0\longrightarrow I/I^2\longrightarrow R/I^2\longrightarrow R/I\longrightarrow0
\]
and extension closure imply $R/I^2\in\C$, proving~\textup{(4)}.

\textup{(4)$\Rightarrow$(3).} If $I,J\in\mathcal F_{\C}$, put $K=I\cap J$. Then $K\in\mathcal F_{\C}$, so $K^2\in\mathcal F_{\C}$ by~\textup{(4)}. Since $K^2\subseteq IJ$, upward closure gives $IJ\in\mathcal F_{\C}$.

\textup{(3)$\Rightarrow$(1).} Let $0\to L\to M\to N\to0$ be exact with $L,N\in\C$. By~\eqref{eq:ann-test} and~\textup{(3)}, the ideal $(\Ann_R L)(\Ann_R N)$ belongs to $\mathcal F_{\C}$. If $b\in\Ann_R N$ and $m\in M$, then $bm\in L$, so $abm=0$ for every $a\in\Ann_R L$. Hence
\[
 (\Ann_R L)(\Ann_R N)\subseteq\Ann_R M.
\]
Upward closure gives $\Ann_R M\in\mathcal F_{\C}$, and~\eqref{eq:ann-test} yields $M\in\C$. Thus $\C$ is wide.
\end{proof}

The commutativity and finiteness hypotheses are essential for proving that a wide subcategory is Serre. For example, let $k$ be a field. Write $\add E$ for the direct summands of finite direct sums of $E$, and consider the projective representation $E=(k\xrightarrow{1}k)$ of the quiver $1\to2$. Every object of $\add E$ is isomorphic to $(k^r\xrightarrow{1}k^r)$ for some $r\ge0$. A morphism between two such objects is given by the same linear map $u$ at both vertices. Its kernel and cokernel are therefore $(\ker u\xrightarrow{1}\ker u)$ and $(\coker u\xrightarrow{1}\coker u)$, which again belong to $\add E$. Moreover, any extension between objects of $\add E$ splits because the quotient is projective. Thus $\add E$ is wide. However, the subrepresentation $(0\to k)$ of $E$ does not belong to $\add E$, so this subcategory is not Serre. For the failure without finite presentation, see the $\mathbb Q$-vector-space example after Proposition~\ref{prop:krause-trace}.

For a subcategory $\mathcal A$ containing $0$ and $n\ge1$, let $\Filt_n(\mathcal A)$ consist of the modules $N$ admitting a filtration $0=N_0\subseteq N_1\subseteq\cdots\subseteq N_m=N$ with $m\le n$ and $N_i/N_{i-1}\in\mathcal A$, and set $\Filt(\mathcal A)=\bigcup_{n\ge1}\Filt_n(\mathcal A)$. All filtrations below take place in $\modcat R$; compare \cite[Section~2]{MS} for the unbounded operation $\Filt$.

\begin{proposition}\label{prop:generation}
Let $M\in\modcat R$. Then, for every $n\ge1$,
\begin{align}
 \ab_R(M)&=\ab_R(R/\Ann_R M)
 =\{N\in\modcat R:(\Ann_R M)N=0\},\label{eq:ab-generator}\\
 \Filt_n(\ab_R(M))&=\{N\in\modcat R:(\Ann_R M)^nN=0\},\label{eq:filtration}\\
 \wide_R(M)&=\Filt(\ab_R(M))
 =\{N\in\modcat R:\Supp_R N\subseteq\Supp_R M\}.\label{eq:wide-generator}
\end{align}
More generally, let $\mathcal S$ be any family of finitely presented modules and, for a finite subfamily $F\subseteq\mathcal S$, put $I_F=\bigcap_{M\in F}\Ann_R M$, with $I_{\varnothing}=R$. Then
\begin{align}
 \ab_R(\mathcal S)&=\{N\in\modcat R:I_FN=0\text{ for some finite }F\subseteq\mathcal S\},\label{eq:ab-family}\\
 \wide_R(\mathcal S)&=\Filt(\ab_R(\mathcal S))\notag\\
 &=\{N\in\modcat R:I_F^nN=0\text{ for some finite }F\subseteq\mathcal S,\ n\ge1\}.\label{eq:wide-family}
\end{align}
\end{proposition}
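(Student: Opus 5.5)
The plan is to use Theorem~\ref{thm:filters} and Lemma~\ref{lem:quotient-modules} throughout: abelian subcategories are determined by their filters, and membership is tested by annihilators via~\eqref{eq:ann-test}.

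\emph{The abelian closures.} For~\eqref{eq:ab-family}, let $\mathcal F$ be the set of finitely generated ideals containing some $I_F$. Each $I_F$ is a finite intersection of finitely generated ideals, so it is finitely generated; intersections of finitely generated ideals in a coherent ring are finitely generated. Since $I_F\cap I_G=I_{F\cup G}$, the family $\mathcal F$ is a filter. By Theorem~\ref{thm:filters}, $\C_{\mathcal F}$ is abelian and contains $\mathcal S$. Conversely, let $\C$ be any abelian subcategory containing $\mathcal S$. Then $\mathcal F_\C$ is a filter containing each $\Ann_R M$ with $M\in\mathcal S$, by~\eqref{eq:ann-test}, and hence containing every $I_F$. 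Therefore $\C_{\mathcal F}\subseteq\C$, and so $\ab_R(\mathcal S)=\C_{\mathcal F}$. Finally, $\Ann_R N\supseteq I_F$ is equivalent to $I_FN=0$. The case $\mathcal S=\{M\}$ gives~\eqref{eq:ab-generator}; the equality with $\ab_R(R/\Ann_R M)$ follows because both modules have the same annihilator.

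\emph{Filtrations.} For~\eqref{eq:filtration}, write $I=\Ann_R M$. Suppose $N$ has a filtration of length $m\le n$ with subquotients killed by $I$. Then, as in the step (3)$\Rightarrow$(1) of Theorem~\ref{thm:filters}, $I^m$ annihilates $N$ by induction on $m$. Conversely, suppose $I^nN=0$. Use the filtration $N_i=I^{n-i}N$. Each $N_i$ is a finitely generated submodule of $N$, since $I$ is finitely generated, so it lies in $\modcat R$ by Proposition~\ref{prop:finite-submodules}. The subquotients $I^{n-i}N/I^{n-i+1}N$ are finitely presented and killed by $I$. For~\eqref{eq:wide-family} I argue in the same way with $I_F$. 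The union over $F$ and $n$ is again $\Filt(\ab_R(\mathcal S))$: a filtration has finitely many factors, each killed by some $I_{F_j}$, so all are killed by $I_{\bigcup F_j}$, which is contained in each $I_{F_j}$. The right-hand side of~\eqref{eq:wide-family} is the abelian subcategory $\C_{\mathcal G}$, where $\mathcal G$ is the multiplicative filter generated by the $I_F$; it is indeed multiplicative because $I_F^aI_G^b\supseteq I_{F\cup G}^{a+b}$. By Theorem~\ref{thm:filters} this subcategory is wide. Any wide subcategory containing $\mathcal S$ contains $\Filt(\ab_R(\mathcal S))$, so the three classes coincide.

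\emph{Support description.} The remaining step, and the main obstacle, is the support description in~\eqref{eq:wide-generator}. If $I^nN=0$, then $\Supp N\subseteq V(I)=\Supp M$, where the last equality holds for finitely generated $M$. For the converse, suppose $N$ is finitely generated with $\Supp N\subseteq V(I)$. Then $V(\Ann N)\subseteq V(I)$, so $I\subseteq\sqrt{\Ann N}$. Since $I$ is finitely generated, some power $I^n$ lies in $\Ann N$. This uses only finite generation of $I$ and the fact that a radical is the intersection of primes; I expect this to be the only point needing care.
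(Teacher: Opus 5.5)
Your proof is correct. Its core coincides with the paper's: the filtration $N_j=I^{n-j}N$, Theorem~\ref{thm:filters} to see that a multiplicative filter yields a wide (indeed Serre) subcategory, and the radical argument for supports.

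The organization differs, however.
\begin{itemize}
\item The paper proves the single-module statements first. It computes $\ab_R(M)$ via Lemma~\ref{lem:quotient-modules} and~\eqref{eq:ann-test}, and gets $\wide_R(M)$ from the filter of finitely generated ideals containing a power of $\Ann_R M$. Only then does it pass to a family $\mathcal S$: it replaces each finite $F$ by $M_F=\bigoplus_{L\in F}L$, so that $\Ann_R M_F=I_F$, and writes $\ab_R(\mathcal S)$ and $\wide_R(\mathcal S)$ as directed unions of the $\ab_R(M_F)$ and $\wide_R(M_F)$.
\item You work with the family from the start. You use that the order isomorphism of Theorem~\ref{thm:filters} carries the abelian (resp.\ wide) subcategory generated by $\mathcal S$ to the filter (resp.\ multiplicative filter) generated by the ideals $I_F$, and you compute these filters explicitly. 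The single-module formulas are then the case $\mathcal S=\{M\}$.
\end{itemize}

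What each route buys:
\begin{itemize}
\item Your route is more uniform and avoids the direct-sum and directed-union bookkeeping. The small cost is that you need closure of $\mathcal I_{\mathrm{fg}}(R)$ under finite intersections to know each $I_F$ is finitely generated; the paper gets this from $I_F=\Ann_R M_F$ and~\eqref{eq:ann-kernel}.
\item The paper's use of Lemma~\ref{lem:quotient-modules} records slightly more. Every object of $\ab_R(M)$ is already obtained from $R/\Ann_R M$ by finite direct sums and cokernels alone.
\end{itemize}
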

\begin{proof}
By~\eqref{eq:ann-test}, $R/\Ann_R M$ belongs to $\ab_R(M)$. Lemma~\ref{lem:quotient-modules} identifies the finitely presented modules annihilated by $\Ann_R M$ with an abelian subcategory containing $M$. It also shows that every module in this subcategory is obtained from $R/\Ann_R M$ using finite direct sums and cokernels. Hence
\[
 \ab_R(M)=\ab_R(R/\Ann_R M)
 =\{N\in\modcat R:(\Ann_R M)N=0\},
\]
proving~\eqref{eq:ab-generator}.

We next prove~\eqref{eq:filtration}, beginning with $n=2$. If $N\in\Filt_2(\ab_R(M))$, choose a filtration $0\subseteq N_1\subseteq N$ with $N_1,N/N_1\in\ab_R(M)$. By~\eqref{eq:ab-generator},
\[
 (\Ann_R M)N_1=0,\quad (\Ann_R M)(N/N_1)=0.
\]
The second equality gives $(\Ann_R M)N\subseteq N_1$, and hence
\[
 (\Ann_R M)^2N\subseteq(\Ann_R M)N_1=0.
\]
Similarly, for a filtration $0=N_0\subseteq N_1\subseteq\cdots\subseteq N_t=N$ with $t\le n$ and factors in $\ab_R(M)$, we have $(\Ann_R M)N_j\subseteq N_{j-1}$ at each step. Repeating the preceding argument gives $(\Ann_R M)^tN=0$, and therefore $(\Ann_R M)^nN=0$.

Conversely, let $N\in\modcat R$ satisfy $(\Ann_R M)^nN=0$. Put $N_j=(\Ann_R M)^{n-j}N$ for $0\le j\le n$, so that
\[
 0=N_0\subseteq N_1\subseteq\cdots\subseteq N_n=N.
\]
Each $N_j$ is finitely generated because $\Ann_R M$ and $N$ are finitely generated; coherence then shows that the terms and successive quotients are finitely presented. Starting at the left end, we have $(\Ann_R M)N_1=(\Ann_R M)^nN=0$, so $N_1\in\ab_R(M)$ by~\eqref{eq:ab-generator}. This proves the assertion when $n=1$. If $n\ge2$, then $(\Ann_R M)N_2=N_1$, so $N_2/N_1$ is also annihilated by $\Ann_R M$ and belongs to $\ab_R(M)$. Thus $0\subseteq N_1\subseteq N_2$ is a filtration of $N_2$ with two factors in $\ab_R(M)$. Repeating this step, using $(\Ann_R M)N_j=N_{j-1}$, shows that every quotient $N_j/N_{j-1}$ belongs to $\ab_R(M)$. Hence $N=N_n\in\Filt_n(\ab_R(M))$, completing the proof of~\eqref{eq:filtration}.

To prove~\eqref{eq:wide-generator}, consider the class
\[
 \mathcal D=\{N\in\modcat R:(\Ann_R M)^nN=0\text{ for some }n\ge1\}.
\]
The finitely generated ideals containing a power of $\Ann_R M$ form a multiplicative filter. Theorem~\ref{thm:filters} therefore shows that $\mathcal D$ is Serre. By~\eqref{eq:filtration}, it equals $\Filt(\ab_R(M))$. Since this class contains $M$ and is contained in every wide subcategory containing $M$, it equals $\wide_R(M)$, proving the first equality in~\eqref{eq:wide-generator}.

For the support description, recall that $\Supp_R L=V(\Ann_R L)$ for every finitely generated module $L$. Therefore
\[
 \begin{aligned}
  \Supp_R N\subseteq\Supp_R M
  &\quad\Longleftrightarrow\quad V(\Ann_R N)\subseteq V(\Ann_R M)\\
  &\quad\Longleftrightarrow\quad \Ann_R M\subseteq\sqrt{\Ann_R N}.
 \end{aligned}
\]
If $(\Ann_R M)^nN=0$, then $(\Ann_R M)^n\subseteq\Ann_R N$, which gives the last inclusion. This shows that $\Filt(\ab_R(M))\subseteq\{N\in\modcat R:\Supp_R N\subseteq\Supp_R M\}$. Conversely, suppose $\Ann_R M\subseteq\sqrt{\Ann_R N}$. If $\Ann_R M=0$, take $n=1$. Otherwise, choose generators $a_1,\ldots,a_r$ of $\Ann_R M$ and integers $e_i\ge1$ with $a_i^{e_i}\in\Ann_R N$. For $n=1+\sum_{i=1}^r(e_i-1)$, every monomial of degree $n$ in the generators contains some $a_i^{e_i}$ as a factor. Consequently $(\Ann_R M)^n\subseteq\Ann_R N$, so $(\Ann_R M)^nN=0$. This proves the reverse inclusion and hence the support equality in~\eqref{eq:wide-generator}.

We now turn to an arbitrary family $\mathcal S$. For a finite subfamily $F\subseteq\mathcal S$, put $M_F=\bigoplus_{L\in F}L$, with $M_{\varnothing}=0$. A subcategory closed under finite direct sums and direct summands contains $F$ if and only if it contains $M_F$, so we have 
$\ab_R(F)=\ab_R(M_F)$, $\wide_R(F)=\wide_R(M_F)$ and  $\Ann_R M_F=I_F$.
The subcategories generated by finite subfamilies form a directed family: those indexed by $F$ and $G$ are both contained in the one indexed by $F\cup G$. Thus any two objects in their union belong to a common such subcategory, where finite direct sums, kernels and cokernels can be computed; in the wide case, extensions remain in that same subcategory as well. Therefore, the unions are abelian and wide, respectively, and contain $\mathcal S$. Each term is contained in the corresponding subcategory generated by $\mathcal S$, and the reverse inclusions follow from minimality. Hence
\[
 \ab_R(\mathcal S)=\bigcup_{F\subseteq\mathcal S,\ F\text{ finite}}\ab_R(M_F)
 \quad \text{and }
 \wide_R(\mathcal S)=\bigcup_{F\subseteq\mathcal S,\ F\text{ finite}}\wide_R(M_F).
\]
Applying~\eqref{eq:ab-generator} to each $M_F$ proves~\eqref{eq:ab-family}. Similarly, \eqref{eq:filtration} and~\eqref{eq:wide-generator} show that $N\in\wide_R(\mathcal S)$ if and only if $I_F^nN=0$ for some finite $F\subseteq\mathcal S$ and $n\ge1$, proving the last expression in~\eqref{eq:wide-family}.

Finally, if $I_F^nN=0$, apply~\eqref{eq:filtration} to $M_F$ to obtain
\[
 N\in\Filt_n(\ab_R(M_F))\subseteq\Filt(\ab_R(\mathcal S)).
\]
Conversely, $\wide_R(\mathcal S)$ contains $\ab_R(\mathcal S)$ and is closed under extensions, so it contains every finite filtration by objects of $\ab_R(\mathcal S)$. This proves the remaining equality in~\eqref{eq:wide-family}.
\end{proof}

The preceding formulas and the structure of finitely generated idempotent ideals give the following criterion.

\begin{corollary}\label{cor:idempotent}
For $M\in\modcat R$, the following are equivalent:
\begin{itemize}
 \item[{\rm (i)}] $\ab_R(M)=\wide_R(M)$, 
 \item[{\rm (ii)}] $(\Ann_R M)^2=\Ann_R M$,
 \item[{\rm (iii)}] $\Ann_R M=Re$ for an idempotent $e$.
\end{itemize}
In this case $\ab_R(M)$ is the category of finitely presented modules over the direct factor $R(1-e)$, regarded as $R$-modules. In particular, every faithful finitely presented module generates $\modcat R$ using finite direct sums, kernels and cokernels alone.
\end{corollary}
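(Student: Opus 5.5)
Put $I=\Ann_R M$. This ideal is finitely generated by \eqref{eq:ann-kernel}. The plan is to reduce everything to a statement about $I$ using Proposition~\ref{prop:generation}, and then to handle (ii)$\Leftrightarrow$(iii) with the determinant trick for finitely generated idempotent ideals.

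\emph{(i)$\Leftrightarrow$(ii).} By \eqref{eq:ab-generator} and \eqref{eq:filtration} with $n=2$, we have $\ab_R(M)=\{N:IN=0\}$ and $\Filt_2(\ab_R(M))=\{N:I^2N=0\}$. If (i) holds, then $R/I^2$ lies in $\Filt_2(\ab_R(M))\subseteq\wide_R(M)=\ab_R(M)$. Hence $I\cdot(R/I^2)=0$, so $I\subseteq I^2$ and $I^2=I$. Here $R/I^2$ is finitely presented because $I^2$ is finitely generated. Conversely, if $I^2=I$ then $I^n=I$ for all $n\ge1$. So each $\Filt_n(\ab_R(M))$ equals $\ab_R(M)$, and \eqref{eq:wide-generator} gives $\wide_R(M)=\Filt(\ab_R(M))=\ab_R(M)$.

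\emph{(ii)$\Leftrightarrow$(iii).} The direction (iii)$\Rightarrow$(ii) is trivial, since $(Re)^2=Re^2=Re$. For (ii)$\Rightarrow$(iii), this is the main step, and I would use the Cayley--Hamilton/determinant trick. Since $I$ is finitely generated and $I\cdot I=I$, Nakayama's lemma in its determinant form gives $a\in I$ with $(1-a)I=0$. Put $e=a$. Then $e\in I$ and $(1-e)e=0$, so $e$ is idempotent. For any $x\in I$ we have $x=ex+(1-e)x=ex$, so $I=Re$. The only care needed is to apply the determinant argument to the module $I$ with generators $a_1,\dots,a_r$. Writing $a_i=\sum_j c_{ij}a_j$ with $c_{ij}\in I$, we get $\det(1-C)\,a_j=0$, and $\det(1-C)=1-a$ with $a\in I$.

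\emph{Remaining claims.} In case (iii), $R\cong Re\times R(1-e)$. A module $N$ satisfies $eN=0$ exactly when it is a module over $R/Re\cong R(1-e)$. Lemma~\ref{lem:quotient-modules} identifies these finitely presented modules with $\modcat(R/Re)=\modcat(R(1-e))$, and \eqref{eq:ab-generator} says this category is $\ab_R(M)$. Finally, if $M$ is faithful then $I=0=R\cdot0$, with idempotent $e=0$. Then \eqref{eq:ab-generator} gives $\ab_R(M)=\{N:0\cdot N=0\}=\modcat R$. So finite direct sums, kernels and cokernels alone generate all of $\modcat R$ from $M$.
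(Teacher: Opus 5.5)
Your proof is correct and follows the paper's argument. You use \eqref{eq:ab-generator}, \eqref{eq:filtration} and \eqref{eq:wide-generator} for the equivalence with (i), where the paper routes the converse through (iii) via $(Re)^n=Re$. You identify $\ab_R(M)$ with $\modcat(R(1-e))$ through $R/Re\cong R(1-e)$, as the paper does. The only real difference is that the paper cites \cite[Lemma~2.43]{Lam} for (ii)$\Rightarrow$(iii), while you supply the standard determinant-trick proof of that lemma, which is valid as written.
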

\begin{proof}
\textup{(i)$\Rightarrow$(ii).} By Proposition~\ref{prop:generation} and (i), we have
\(
 R/(\Ann_R M)^2\in\wide_R(M)=\ab_R(M).
\)
Then by~\eqref{eq:ab-generator}, it is annihilated by $\Ann_R M$, which means that $\Ann_R M\subseteq(\Ann_R M)^2$. The reverse inclusion always holds, proving (ii).

\textup{(ii)$\Rightarrow$(iii).} The ideal $\Ann_R M$ is finitely generated by~\eqref{eq:ann-kernel}. Every finitely generated idempotent ideal of a commutative ring is generated by an idempotent \cite[Lemma~2.43]{Lam}, so~\textup{(ii)} implies~\textup{(iii)}.

\textup{(iii)$\Rightarrow$(i).} Since $e^2=e$, for every $n\ge1$ we have
\[
 (\Ann_R M)^n=(Re)^n=Re=\Ann_R M.
\]
Consequently~\eqref{eq:ab-generator} and~\eqref{eq:filtration} give $\Filt_n(\ab_R(M))=\ab_R(M)$ for every $n\ge1$. Taking the union over $n$ and using~\eqref{eq:wide-generator}, we obtain $\wide_R(M)=\ab_R(M)$, proving (i).

Finally, the ring isomorphism $R/\Ann_R M\xrightarrow{\sim}R(1-e)$, given by $r+\Ann_R M\mapsto r(1-e)$, together with~\eqref{eq:ab-generator} identifies $\ab_R(M)$ with the category of finitely presented modules over $R(1-e)$. If $M$ is faithful, then $\Ann_R M=0$ and~\eqref{eq:ab-generator} gives $\ab_R(M)=\modcat R$.
\end{proof}

We next descend a finite presentation to apply the commutative noetherian torsion-class theorem of Stanley and Wang \cite[Corollary~7.1]{SW}, also proved in \cite[Corollary~3.23(b)]{IK}.

\begin{theorem}\label{thm:torsion-serre}
Every torsion class in $\modcat R$ is Serre. More precisely, if $M$ belongs to a torsion class $\T$, then $R/\Ann_R M$ belongs to $\T$, and so does every finitely presented module annihilated by $\Ann_R M$.
\end{theorem}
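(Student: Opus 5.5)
The plan is to reduce to the commutative noetherian case by descending a finite presentation of $M$ to a finitely generated $\Z$-subalgebra of $R$, apply Stanley--Wang \cite[Corollary~7.1]{SW} there, and transport the result back along $-\otimes_{A_0}R$. The last assertion and the Serre property will then follow from Lemma~\ref{lem:quotient-modules}.

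\emph{Descent.} Let $M\in\T$ and choose a presentation $R^a\xrightarrow{(a_{ij})}R^b\to M\to0$. Let $A_0=\Z[a_{ij}]\subseteq R$, which is noetherian, and put $M_0=\coker\bigl(A_0^a\xrightarrow{(a_{ij})}A_0^b\bigr)$. Then $M_0\otimes_{A_0}R\cong M$. By Stanley--Wang, torsion classes in $\modcat A_0$ are Serre. The Serre subcategory generated by $M_0$ contains $A_0/\Ann_{A_0}M_0$, since this module embeds in $M_0^s$. Hence $A_0/\Ann_{A_0}M_0$ lies in $\tors_{A_0}(M_0)$. In any abelian category, $\tors(M_0)$ is the class of objects admitting a finite filtration whose factors are quotients of finite direct sums of $M_0$. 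Consequently $A_0/\Ann_{A_0}M_0$ is reached from $M_0$ by finitely many operations, each of which is a finite direct sum, a quotient, or an extension.

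\emph{Transport.} Apply $-\otimes_{A_0}R$ to each step; it commutes with direct sums. Right exactness sends quotients to quotients. An extension $0\to L\to E\to N\to0$ becomes
\[
L\otimes R\to E\otimes R\to N\otimes R\to0.
\]
So $E\otimes R$ is an extension of $N\otimes R$ by the image of $L\otimes R$, which is a quotient of $L\otimes R$. All modules involved are finitely presented over $R$:
\begin{itemize}[leftmargin=*]
\item base changes of finitely presented modules are finitely presented;
\item the image of $L\otimes R$ is a finitely generated submodule of the finitely presented module $E\otimes R$, hence finitely presented by Proposition~\ref{prop:finite-submodules} and coherence;
\item quotients of finitely presented modules by finitely generated submodules remain finitely presented.
\end{itemize}
By induction on the number of steps, $\T$ contains
\[
(A_0/\Ann_{A_0}M_0)\otimes_{A_0}R\cong R/(\Ann_{A_0}M_0)R.
\]
Since $(\Ann_{A_0}M_0)R\subseteq\Ann_RM$, the module $R/\Ann_RM$ is a quotient of this one. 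Moreover $\Ann_RM$ is finitely generated by \eqref{eq:ann-kernel}, so $R/\Ann_RM$ is finitely presented. Hence $R/\Ann_RM\in\T$.

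\emph{Conclusion.} A torsion class is closed under finite direct sums, because direct sums are split extensions. It is closed under cokernels, because cokernels are quotients. So Lemma~\ref{lem:quotient-modules}, applied with $I=\Ann_RM$, shows that every finitely presented module annihilated by $\Ann_RM$ lies in $\T$. For a subobject $L\subseteq M$ in $\modcat R$ we have $\Ann_RM\subseteq\Ann_RL$, so $L\in\T$. Thus $\T$ is closed under subobjects, quotients and extensions, i.e.\ it is Serre.

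The main obstacle is the transport step. An extension does not stay an extension after the non-exact base change, and one must check that each intermediate module remains finitely presented over $R$. The first point is handled by replacing the left-hand term with its image, which is a quotient and so costs only one extra quotient step. The second is handled by coherence of $R$.
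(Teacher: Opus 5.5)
Your proof is correct and follows the paper's argument. You descend the presentation to a finitely generated $\Z$-subalgebra, apply Stanley--Wang there, base-change $A_0/\Ann_{A_0}M_0$ to get $R/(\Ann_{A_0}M_0)R\in\T$, and pass to the quotient $R/\Ann_R M$. The only difference is packaging: the paper shows $\{L\in\modcat A_0: L\otimes_{A_0}R\in\T\}$ is a torsion class by citing Iyama--Kimura's lemma on right exact functors. You instead verify this transport by hand through the filtration description of $\tors_{A_0}(M_0)$, with coherence ensuring that the image of $L\otimes_{A_0}R$ lies in $\modcat R$.
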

\begin{proof}
Let $M\in\T$ and choose a finite presentation
\[
 R^a\xrightarrow{d}R^b\longrightarrow M\longrightarrow0.
\]
Let $R_0\subseteq R$ be the $\mathbb Z$-subalgebra generated by the entries of $d$. It is a finitely generated $\mathbb Z$-algebra and hence is noetherian. Taking the cokernel of the same matrix over $R_0$ gives a finitely presented $R_0$-module $M_0$ with $R\otimes_{R_0}M_0\cong M$. Consider the class
\[
 \mathcal D=\{L\in\modcat R_0:R\otimes_{R_0}L\in\T\}.
\]
Tensoring a finite presentation shows that $R\otimes_{R_0}L$ is finitely presented over $R$ whenever $L\in\modcat R_0$. Thus the defining condition for $\mathcal D$ takes place in $\modcat R$, and $M_0\in\mathcal D$.

The functor $R\otimes_{R_0}-$ is right exact between the abelian categories $\modcat R_0$ and $\modcat R$. Thus $\mathcal D$ is a torsion class by \cite[Lemma~3.2(a)]{IK}.

Since $R_0$ is commutative and noetherian, the theorem of Stanley and Wang \cite{SW}, also proved in \cite[Corollary~3.23(b)]{IK}, shows that $\mathcal D$ is Serre. Choose generators $m_1,\ldots,m_s$ of the $R_0$-module $M_0$. Then the map
\[
 R_0\longrightarrow M_0^s,\quad
 a\longmapsto(am_1,\ldots,am_s)
\]
 induces an injection
$R_0/\Ann_{R_0}M_0\hookrightarrow M_0^s$.
Since $M_0^s\in\mathcal D$ and $\mathcal D$ is closed under subobjects, we obtain $R_0/\Ann_{R_0}M_0\in\mathcal D$. Tensoring this module gives
\(
 R/(\Ann_{R_0}M_0)R\in\T.
\)
Every element of $\Ann_{R_0}M_0$ annihilates $R\otimes_{R_0}M_0\cong M$, so
$(\Ann_{R_0}M_0)R\subseteq\Ann_R M$. Consequently there is a surjection
\[
 R/(\Ann_{R_0}M_0)R\longrightarrow R/\Ann_R M\longrightarrow0.
\]
 Closure under quotients now gives $R/\Ann_R M\in\T$.

Finally, let $N\in\modcat R$ satisfy $(\Ann_R M)N=0$. A finite generating set of $N$ gives a surjection $(R/\Ann_R M)^t\twoheadrightarrow N$, which implies that $N\in\T$. In particular, every subobject $L\subseteq M$ in $\modcat R$ is annihilated by $\Ann_R M$ and hence belongs to $\T$. Thus $\T$ is closed under subobjects as well as quotients and extensions, proving that it is Serre.
\end{proof}

\section{Perfect realization and support classification}\label{sec:perfect}

For every perfect complex $P$ and a prime ideal $\mathfrak p$, exactness of localization gives $H_i(P_{\mathfrak p})\cong H_i(P)_{\mathfrak p}$. Thus $P_{\mathfrak p}\simeq0$ if and only if $H_i(P)_{\mathfrak p}=0$ for every $i$. Consequently,
\begin{equation}\label{eq:homology-support}
 \Supp_R P=\bigcup_i\Supp_R H_i(P).
\end{equation}

Write $\K_R(\boldsymbol a)$ for the homological Koszul complex on $\boldsymbol a=(a_1,\ldots,a_r)$: it is the total tensor product $\bigotimes_{j=1}^r[R\xrightarrow{a_j}R]$, with each factor in degrees $1,0$ and the usual tensor differential \cite[Section~4.5, pp.~111--112]{Weibel}. Thus it lies in degrees $0,\ldots,r$; use $\K_R(\varnothing)=R[0]$. Multiplication by every $a_j$ on this complex is null-homotopic \cite[(2.9)(a) and its proof]{DGI}, so the ideal $(\boldsymbol a)=(a_1,\ldots,a_r)$ annihilates all its homology.

\begin{theorem}\label{thm:witness}
Let $M\in\modcat R$, let $\boldsymbol a=(a_1,\ldots,a_r)$ generate $\Ann_R M$, and choose a finite free presentation $R^a\xrightarrow{d}R^b\twoheadrightarrow M$. Then the bounded finite free complex
\begin{equation}\label{eq:witness}
 X_M=[R^a\xrightarrow{d}R^b]\otimes_R\K_R(\boldsymbol a),
\end{equation}
with presentation complex in degrees $1,0$, satisfies
\begin{equation}\label{eq:witness-properties}
 H_0(X_M)\cong M,\quad H_j(X_M)\in\ab_R(M)\quad(j\in\Z),\quad
 \Supp_R X_M=\Supp_R M.
\end{equation}
For $M=0$, one may instead take $X_M=0$.
\end{theorem}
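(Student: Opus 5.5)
The plan is to treat $X_M$ as the tensor product of the presentation complex $C=[R^a\xrightarrow{d}R^b]$ with $\K_R(\boldsymbol a)$. The proof then has three steps: compute $H_0$ by right exactness, bound the higher homology by an annihilator argument together with coherence, and read off the support from~\eqref{eq:homology-support}. All terms of $X_M$ are finite free and it lives in degrees $0,\ldots,r+1$, so it is a bounded finite free complex, hence perfect.

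\emph{Step 1: $H_0$.} In degree $0$ we have $(X_M)_0=R^b\otimes R=R^b$. The degree-one part is $(R^a\otimes R)\oplus(R^b\otimes R^r)$, mapping to $R^b$ by $d$ on the first summand and by $(a_1,\ldots,a_r)$ on the second. Hence $H_0(X_M)=\coker d/(\boldsymbol a)\coker d=M/(\Ann_R M)M=M$.

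\emph{Step 2: homology lies in $\ab_R(M)$.} Each $H_j(X_M)$ is finitely presented. Indeed, it is a subquotient of finite free modules: the kernel of a map between finitely presented modules is finitely presented by coherence, and the cokernel of a map into a finitely presented module is finitely presented. So each $H_j(X_M)$ lies in $\modcat R$. Multiplication by each $a_j$ is null-homotopic on $\K_R(\boldsymbol a)$, so after tensoring with $C$ it is null-homotopic on $X_M$. Hence $(\boldsymbol a)=\Ann_R M$ annihilates every $H_j(X_M)$. By~\eqref{eq:ab-generator}, $\ab_R(M)=\{N\in\modcat R:(\Ann_R M)N=0\}$, so $H_j(X_M)\in\ab_R(M)$ for all $j$. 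For $j<0$ or $j>r+1$ the homology is $0$, which is trivially in $\ab_R(M)$.

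\emph{Step 3: support.} By~\eqref{eq:homology-support}, $\Supp_R X_M=\bigcup_j\Supp_R H_j(X_M)$.
\begin{itemize}
\item This union contains $\Supp_R H_0=\Supp_R M$.
\item Each $H_j$ is killed by $\Ann_R M$, so $\Supp_R H_j\subseteq V(\Ann_R H_j)\subseteq V(\Ann_R M)=\Supp_R M$. Here we use that $M$ is finitely generated, so $\Supp_R M=V(\Ann_R M)$.
\end{itemize}
This gives equality. Alternatively, one can argue with~\eqref{eq:wide-generator}. The case $M=0$ is trivial with $X_M=0$.

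The main obstacle, mild as it is, is Step 2: transporting the null-homotopy of $a_j$ through the tensor product, i.e.\ checking that $1_C\otimes h$ is a homotopy for $a_j$ with the appropriate signs, and confirming finite presentation of all homology via coherence rather than only finite generation. Step 1 requires care only in identifying the degree-one differential under the sign conventions; the signs do not affect the image.
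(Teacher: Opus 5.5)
Your proposal is correct and follows essentially the same route as the paper. The paper likewise computes $H_0$ from the degree-one differential $d(u)+\sum_j a_jv_j$. It transports the Koszul null-homotopies $s_j$ for $a_j$ to $X_M$ with the sign $(-1)^p$ on $C_p\otimes\K_R(\boldsymbol a)$, and combines this with coherence and~\eqref{eq:ab-generator}. It then sandwiches the support using~\eqref{eq:homology-support} and $\Supp_R M=V(\Ann_R M)$.

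One statement should be tightened: a cokernel of a map into a finitely presented module is finitely presented only when the source is finitely generated. That holds here, because the boundary module in degree $j$ is the image of the finite free module $(X_M)_{j+1}$.
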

\begin{proof}
Put $C=[R^a\xrightarrow{d}R^b]$, with $C_1=R^a$ and $C_0=R^b$. We first compute the zeroth homology of $X_M=C\otimes_R\K_R(\boldsymbol a)$. The complex has the form
\[
 X_M:\quad\cdots\longrightarrow (X_M)_2\longrightarrow
 \underbrace{R^a\oplus(R^b)^r}_{\text{degree }1}
 \xrightarrow{\partial_1}
 \underbrace{R^b}_{\text{degree }0}\longrightarrow0,
\]
where $\partial_1(u,v_1,\ldots,v_r)=d(u)+\sum_{j=1}^r a_jv_j$. Thus $\im\partial_1=\im d+(\Ann_R M)R^b$, and
\[
 H_0(X_M)=R^b/\bigl(\im d+(\Ann_R M)R^b\bigr)
 \cong M/\bigl((\Ann_R M)M\bigr)=M.
\]
We next show that $\Ann_R M$ annihilates every homology module of $X_M$. For a generator $a_j$, let $s_j$ be a Koszul homotopy satisfying $d_{\K}s_j+s_jd_{\K}=a_j\operatorname{id}_{\K}$. On a homogeneous tensor with $c\in C_p$, define
\(
 h_j(c\otimes k)=(-1)^p c\otimes s_j(k).
\)
Using the tensor differential, the two terms involving $d_C(c)$ cancel, and we obtain
\[
 (d_{X_M}h_j+h_jd_{X_M})(c\otimes k)
 =c\otimes(d_{\K}s_j+s_jd_{\K})(k)
 =a_j(c\otimes k).
\]
Thus multiplication by $a_j$ induces zero on homology. Applying this to each generator gives $(\Ann_R M)H_i(X_M)=0$ for every $i$. Since $X_M$ is a bounded complex of finite free modules and $R$ is coherent, its cycles, boundaries and homology modules are finitely presented. Equation~\eqref{eq:ab-generator} therefore gives $H_i(X_M)\in\ab_R(M)$.

It follows that $\Supp_R H_i(X_M)\subseteq V(\Ann_R M)$ for every $i$, and hence $\Supp_R X_M\subseteq V(\Ann_R M)$ by~\eqref{eq:homology-support}. Conversely, $H_0(X_M)\cong M$ gives $\Supp_R M\subseteq\Supp_R X_M$. Since $\Supp_R M=V(\Ann_R M)$, these inclusions prove the support equality in~\eqref{eq:witness-properties}.
\end{proof}

This construction supplies the perfect complex sought in the discussion after \cite[Corollary~3.2]{Hovey}, with the stronger requirement that all homology belongs to $\ab_R(M)$. It imposes no finite projective dimension assumption on $M$. Takahashi first realizes cyclic modules by Koszul complexes and then uses a finite cyclic filtration to reconstruct a general module \cite[Lemmas~3.4--3.5 and proof of Theorem~3.6]{Takahashi}. Theorem~\ref{thm:witness} instead realizes any finitely presented $M$ as $H_0$ of one explicit perfect complex. The closure assumptions differ: Takahashi's cyclic lemma requires only finite sums and cokernels, whereas the abelian generation used here also allows kernels.

For a Thomason subset $V$, we write
\[
 \W_V=\{M\in\modcat R:\Supp_R M\subseteq V\}\quad \text{and} \quad
 \T_V=\{P\in\Dperf(R):\Supp_R P\subseteq V\}.
\]
The noetherian case of the perfect-complex classification is due to Hopkins and Neeman \cite[Theorem~1.5]{Neeman}. We use Thomason's generalization to arbitrary commutative rings \cite[Theorem~3.15]{Thomason}: the assignments
\begin{equation}\label{eq:thomason}
 \T\longmapsto\bigcup_{P\in\T}\Supp_R P,
 \quad V\longmapsto\T_V
\end{equation}
are inverse order-preserving bijections between thick subcategories and Thomason subsets. Every thick subcategory of $\Dperf(R)$ is a \emph{tensor ideal}: it contains $P\otimes_R^{\mathbf L}Q$ whenever it contains $P$ and $Q\in\Dperf(R)$. This is the affine case of \cite[Corollary~3.11.1(a)]{Thomason}.

\begin{proof}[\bf Proof of Theorem~\ref{thm:main}]
\textup{(1).} The equalities were proved in Section~\ref{sec:finite}, using Proposition~\ref{prop:finite-submodules} and Theorem~\ref{thm:torsion-serre}. The remaining isomorphism is the correspondence of Garkusha and Prest \cite[Corollary~2.4 and its proof]{GP}: a Serre subcategory is sent to its filtered-colimit closure, and the inverse is $\mathcal U\mapsto\mathcal U\cap\modcat R$.

\textup{(2).} Since $R$ is coherent, Hovey's results show that $f_R$ is well defined and surjective \cite[Proposition~1.2, Lemma~1.6 and Corollary~2.2]{Hovey}. We use Theorem~\ref{thm:witness} to prove injectivity and identify the inverse in~\eqref{eq:inverse}.

For every $P\in f_R(\W)$, the definition gives $H_0(P)\in\W$. Conversely, if $M\in\W$, then $\ab_R(M)\subseteq\W$. Theorem~\ref{thm:witness} therefore gives $X_M\in f_R(\W)$ with $H_0(X_M)\cong M$. These two inclusions prove
\begin{equation}\label{eq:homology-recovery}
 \{H_0(P):P\in f_R(\W)\}=\W,
\end{equation}
and hence injectivity of $f_R$.

Now let $\T$ be any thick subcategory. By surjectivity, choose a wide subcategory $\W$ with $f_R(\W)=\T$. Equation~\eqref{eq:homology-recovery} gives $g_R(\T)=\W$, and hence $f_R(g_R(\T))=\T$. Thus $g_R$ is the inverse of $f_R$. Both maps preserve inclusion, so they are complete lattice isomorphisms. The values of $g_R$ are Serre by~(1). Moreover, for $M\in g_R(\T)=\W$, Theorem~\ref{thm:witness} gives $X_M\in f_R(\W)=\T$ with $H_0(X_M)\cong M$. Thus bounded finite free representatives suffice in~\eqref{eq:inverse}.

\textup{(3).} By~(1) and \cite[Theorem~B]{GP}, the maps $\W\mapsto\Supp_R(\W)$ and $V\mapsto\W_V$ are inverse order-preserving bijections. Thus the left diagonal in~\eqref{eq:main-lattices} is a complete lattice isomorphism. The right diagonal is the isomorphism~\eqref{eq:thomason}, and the horizontal arrows are inverse complete lattice isomorphisms by~(2).

It remains to check commutativity. For every Thomason subset $V$, equation~\eqref{eq:homology-support} gives
\[
 P\in f_R(\W_V)
 \quad\Longleftrightarrow\quad
 \Supp_R H_i(P)\subseteq V\text{ for every }i
 \quad\Longleftrightarrow\quad P\in\T_V.
\]
Hence $f_R(\W_V)=\T_V$. Since $\Supp_R(\W_V)=V=\Supp_R(\T_V)$ and every wide subcategory is of the form $\W_V$, the diagram commutes.
\end{proof}

\begin{remark}\label{rem:witness-generators}
The complex $X_M$ in~\eqref{eq:witness} also records the size of the construction and the abelian subcategory generated by its homology. Its terms are
\[
 (X_M)_j\cong R^{\,b\binom rj+a\binom r{j-1}}\quad(0\le j\le r+1),
\]
where binomial coefficients outside their usual range are zero. Indeed, only $R^b\otimes_R\K_R(\boldsymbol a)_j$ and $R^a\otimes_R\K_R(\boldsymbol a)_{j-1}$ contribute to total degree $j$, and $\K_R(\boldsymbol a)_j\cong R^{\binom rj}$.

Moreover,
\begin{equation}\label{eq:ab-homology}
 \ab_R\{H_j(X_M):j\in\Z\}
 =\ab_R\{H_j(\K_R(\boldsymbol a)):j\in\Z\}=\ab_R(M).
\end{equation}
Indeed, all the homology modules of $X_M$ belong to $\ab_R(M)$, while $H_0(X_M)\cong M$ gives the reverse inclusion. For the Koszul complex, its homology modules are finitely presented and annihilated by $\Ann_R M$, so they too belong to $\ab_R(M)$. Its zeroth homology is $R/\Ann_R M$, which generates $\ab_R(M)$ by~\eqref{eq:ab-generator}. This proves both equalities.
\end{remark}

We record consequences of the support classification for generation and finite lattice operations. Recall that an element $c$ of a complete lattice is \emph{compact} if $c\le\bigvee_\lambda c_\lambda$ implies $c\le\bigvee_{\lambda\in F}c_\lambda$ for some finite $F$.

\begin{corollary}\label{cor:compact}
For any family $\mathcal S\subseteq\modcat R$,
\[
 \wide_R(\mathcal S)=\left\{N\in\modcat R:\Supp_R N\subseteq\bigcup_{M\in\mathcal S}\Supp_R M\right\}.
\]
The compact elements of $\Wide(\modcat R)$ are precisely the subcategories $\wide_R(M)$, equivalently the subcategories $\wide_R(R/I)$ for finitely generated ideals $I$. They generate the lattice by joins. For finitely generated ideals $I,J$, one has
\begin{equation}\label{eq:finite-lattice}
 \begin{aligned}
  \wide_R(R/I)\cap\wide_R(R/J)&=\wide_R\bigl(R/(I+J)\bigr),\\
  \wide_R(R/I)\vee\wide_R(R/J)&=\wide_R(R/IJ).
 \end{aligned}
\end{equation}
For $\Ann_R M=(\boldsymbol a)$ one also has
\[
 f_R(\wide_R(M))=\thick_R(\K_R(\boldsymbol a))=\thick_R(X_M).
\]
\end{corollary}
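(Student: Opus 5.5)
The plan is to reduce everything to annihilator ideals via Proposition~\ref{prop:generation}, and to handle the perfect-complex statement through Theorem~\ref{thm:witness} and Theorem~\ref{thm:main}(3).

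\emph{The support formula for $\wide_R(\mathcal S)$.} Equation~\eqref{eq:wide-family} says that $N\in\wide_R(\mathcal S)$ if and only if $I_F^nN=0$ for some finite $F\subseteq\mathcal S$ and some $n\ge1$. As in the proof of~\eqref{eq:wide-generator}, this holds exactly when $V(\Ann_R N)\subseteq V(I_F)$. Since $V(I_F)=\bigcup_{M\in F}\Supp_R M$, membership is equivalent to $\Supp_R N$ lying in the support of some finite subfamily. This gives the inclusion ``$\subseteq$''.

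For ``$\supseteq$'' we need a compactness step. If $\Supp_R N=V(\Ann_R N)$ is contained in $\bigcup_{M\in\mathcal S}V(\Ann_R M)$, we must find a finite subcover. The set $V(J)$, with $J=\Ann_R N$ finitely generated, has quasi-compact open complement, but $V(J)$ itself need not be quasi-compact in the Zariski topology. So I would instead use the constructible (patch) topology, in which each $V(\Ann_R M)$ is open because its complement is a quasi-compact open set, and in which $V(J)$ is closed and hence compact.

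An alternative I would try first is simply to invoke Theorem~\ref{thm:main}(3). The wide subcategories correspond to Thomason subsets via $\W\mapsto\Supp_R(\W)$, with inverse $V\mapsto\W_V$. Thus $\wide_R(\mathcal S)=\W_V$ for the smallest Thomason $V$ containing every $\Supp_R M$. That union is already Thomason, being a union of sets $V(\Ann_R M)$ with finitely generated annihilators. This route gives the formula without any compactness argument, so I would use it, and keep the compactness argument for the next step.

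\emph{Compactness.} For compactness of $\wide_R(M)$: joins of wide subcategories correspond to unions of Thomason subsets. If $\wide_R(M)\le\bigvee_\lambda\W_\lambda$, then $M$ lies in $\wide_R(\bigcup_\lambda\W_\lambda)$. By~\eqref{eq:wide-family}, $M$ then lies in the subcategory generated by a finite subfamily, which in turn lies in a finite join. This avoids topology entirely.

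Conversely, every $\W$ equals $\bigvee_{M\in\W}\wide_R(M)$, so the $\wide_R(M)$ generate the lattice by joins. If $\W$ is compact, then $\W$ is a finite join $\bigvee_{i}\wide_R(M_i)=\wide_R(\bigoplus_i M_i)$. Finally, $\wide_R(M)=\wide_R(R/\Ann_R M)$ by~\eqref{eq:ab-generator}, which gives the description by $R/I$ with $I$ finitely generated.

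\emph{The lattice identities~\eqref{eq:finite-lattice}.} For the meet, use~\eqref{eq:wide-generator}: $\Supp_R N\subseteq V(I)\cap V(J)=V(I+J)$. The ideal $I+J$ is finitely generated, and $\Supp(R/(I+J))=V(I+J)$. For the join, the support formula gives the set $V(I)\cup V(J)=V(IJ)$.

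\emph{The perfect-complex statement.} By Theorem~\ref{thm:main}(3), $f_R(\wide_R(M))=\T_{V}$ with $V=\Supp_R M=V(\boldsymbol a)$. Under Thomason's bijection~\eqref{eq:thomason}, $\thick_R(E)$ corresponds to the smallest Thomason subset containing $\Supp_R E$. By Theorem~\ref{thm:witness}, $\Supp_R X_M=\Supp_R M$. For the Koszul complex, \eqref{eq:homology-support} together with the fact that its homology is annihilated by $(\boldsymbol a)$ and has $H_0=R/(\boldsymbol a)$ gives $\Supp_R\K_R(\boldsymbol a)=V(\boldsymbol a)$. Since $V(\boldsymbol a)$ is itself Thomason, both thick closures equal $\T_V$.

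The only delicate point is ensuring that the smallest Thomason subset containing a given union of sets $V(\text{f.g.})$ is the union itself. This is immediate from the definition of a Thomason subset, so I expect no real obstacle beyond careful bookkeeping with Theorem~\ref{thm:main}(3).
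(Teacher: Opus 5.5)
Your proposal is correct and follows essentially the paper's route: the support formula from Theorem~\ref{thm:main}(3), using that the union of the $\Supp_R M$ is already Thomason; compactness from the finite-subfamily description~\eqref{eq:wide-family} together with $\W=\bigvee_{M\in\W}\wide_R(M)$ and~\eqref{eq:ab-generator}; the identities~\eqref{eq:finite-lattice} from $V(I)\cap V(J)=V(I+J)$ and $V(I)\cup V(J)=V(IJ)$; and the thick-closure statement from $\Supp_R\K_R(\boldsymbol a)=\Supp_R X_M=V(\boldsymbol a)$ and~\eqref{eq:thomason}. One small correction to your discarded aside: $V(J)$ is always quasi-compact in the Zariski topology, being closed in the quasi-compact space $\Spec R$, so the real obstruction is that the covering sets $V(\Ann_R M)$ are closed rather than open; passing to the patch topology repairs exactly this, and it would give the inclusion ``$\supseteq$'' directly from~\eqref{eq:wide-family} without \cite[Theorem~B]{GP}.
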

\begin{proof}
The support formula follows from Theorem~\ref{thm:main}(3), since joins of Thomason subsets are unions. Equation~\eqref{eq:wide-family} shows that membership in a generated wide subcategory depends on a finite subfamily of generators. Hence each $\wide_R(M)$ is compact. Conversely,
\[
 \W=\bigvee_{M\in\W}\wide_R(M),
\]
so a compact $\W$ is a finite join of such subcategories and is generated by the direct sum of the corresponding modules. By~\eqref{eq:ab-generator}, $\wide_R(M)=\wide_R(R/\Ann_R M)$, which gives the equivalent description by the subcategories $\wide_R(R/I)$ with $I$ finitely generated.

Under the support isomorphism, $\wide_R(R/I)$ corresponds to $V(I)$. The identities $V(I)\cap V(J)=V(I+J)$ and $V(I)\cup V(J)=V(IJ)$ therefore give~\eqref{eq:finite-lattice}.

Finally, the Koszul complex on $\boldsymbol a$ and the complex $X_M$ both have support $V(\Ann_R M)$, by the Koszul homology annihilation property and Theorem~\ref{thm:witness}. Thomason's classification~\eqref{eq:thomason} identifies their thick closures with $f_R(\wide_R(M))$.
\end{proof}


\section{Change of rings}\label{sec:basechange}

The support classification in Theorem~\ref{thm:main} controls change of rings without a flatness assumption. We first describe the induced lattice maps, then compare the lengths of quotient-and-extension filtrations.


Let $\varphi:R\to S$ be a homomorphism of commutative coherent rings, and let $\pi:\Spec S\to\Spec R$ be the induced map. For $\W\in\Wide(\modcat R)$ and $\T\in\Thick(\Dperf(R))$, we define
\[
 \varphi_!\W=\wide_S\{S\otimes_R M:M\in\W\}\quad \text{and}\quad
 \varphi_!\T=\thick_S\{S\otimes_R^{\mathbf L}P:P\in\T\}.
\]
\begin{proposition}\label{prop:basechange}
The following diagram commutes, and each horizontal arrow is an isomorphism of complete lattices:
\begin{equation}\label{eq:basechange}
 \begin{tikzcd}[column sep=normal,row sep=normal]
 \Wide(\modcat R)
   \arrow[r,"f_R","\sim"']
   \arrow[d,"\varphi_!"']
 & \Thick(\Dperf(R))
   \arrow[r,"\Supp_R","\sim"']
   \arrow[d,"\varphi_!"]
 & \Thom(\Spec R)\arrow[d,"\pi^{-1}"]
 \\
 \Wide(\modcat S)\arrow[r,"f_S","\sim"']
 & \Thick(\Dperf(S))\arrow[r,"\Supp_S","\sim"']
 & \Thom(\Spec S)
 \end{tikzcd}
\end{equation}
Both operations $\varphi_!$ preserve arbitrary joins, finite meets and compact elements; for a finitely generated ideal $I$,
\[
 \varphi_!\wide_R(R/I)=\wide_S(S/IS).
\]
Each map $\varphi_!$ is an order embedding if and only if $\pi$ is surjective. Under this condition, for every $M\in\modcat R$,
\begin{equation}\label{eq:detection}
 M\in\W\quad\Longleftrightarrow\quad S\otimes_R M\in\varphi_!\W.
\end{equation}
In particular,~\eqref{eq:detection} holds for faithfully flat homomorphisms. If $J$ is a nil ideal and $R/J$ is coherent, the quotient map induces isomorphisms on both lattices. This applies to every finitely generated nilpotent ideal.
\end{proposition}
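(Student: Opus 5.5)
The plan is to reduce everything to Thomason subsets, where the statements become point-set facts about $\pi$. The horizontal arrows are isomorphisms by Theorem~\ref{thm:main}. First I check that $\pi^{-1}$ sends Thomason subsets to Thomason subsets: if $V=\bigcup V(I_\lambda)$ with each $I_\lambda$ finitely generated, then
\[
\pi^{-1}V=\bigcup\pi^{-1}V(I_\lambda)=\bigcup V(I_\lambda S),
\]
and each $I_\lambda S$ is finitely generated. This also shows that $\pi^{-1}$ preserves arbitrary unions and finite intersections, and sends $V(I)$ to $V(IS)$. Compactness is handled through Corollary~\ref{cor:compact}: the compact elements are exactly those corresponding to single sets $V(I)$.

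For the right square, take $P\in\T$. Then $(S\otimes^{\mathbf L}_RP)_{\mathfrak q}\simeq S_{\mathfrak q}\otimes^{\mathbf L}_{R_{\mathfrak p}}P_{\mathfrak p}$ with $\mathfrak p=\pi(\mathfrak q)$. If $P_{\mathfrak p}\simeq0$, this vanishes. Conversely, if $P_{\mathfrak p}\not\simeq0$, let $n$ be the largest $i$ with $H_i(P_{\mathfrak p})\ne0$. By right exactness, the top homology of the base change is $S_{\mathfrak q}\otimes H_n(P_{\mathfrak p})$, which is nonzero by Nakayama, since $H_n(P_{\mathfrak p})$ is finitely generated and nonzero over the local ring $R_{\mathfrak p}$ and $\mathfrak pS_{\mathfrak q}\subseteq\mathfrak qS_{\mathfrak q}$. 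Hence $\Supp_S(S\otimes^{\mathbf L}P)=\pi^{-1}\Supp_RP$. Since the support of a thick closure is the union of the supports by \eqref{eq:thomason}, we get $\Supp_S\varphi_!\T=\pi^{-1}\Supp_R\T$.

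The left square is handled the same way. For finitely generated $M$, Nakayama gives $\Supp_S(S\otimes_RM)=\pi^{-1}\Supp_RM$, and Corollary~\ref{cor:compact} gives $\Supp_S\varphi_!\W=\bigcup_{M\in\W}\pi^{-1}\Supp_RM$. The formula $\varphi_!\wide_R(R/I)=\wide_S(S/IS)$ is then immediate from $S\otimes R/I=S/IS$.

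Everything is now transported to $\pi^{-1}$ on $\Thom$, and the order-embedding criterion becomes the main point. If $\pi$ is surjective, then $\pi(\pi^{-1}V)=V$, so $\pi^{-1}V\subseteq\pi^{-1}V'$ implies $V\subseteq V'$. Conversely, suppose $\mathfrak p\notin\operatorname{im}\pi$. I would apply the order embedding to the pair $V=V(I)$ and $V'$, where $V'$ is the union of the sets $V(J)$ with $J\subseteq I$ finitely generated and $V(J)\not\ni\mathfrak p$ (or similar), and obtain a contradiction with the fact that the image of a quasi-compact closed set of $\Spec S$ lies outside $\mathfrak p$. This is the step I expect to be hardest. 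The cleanest attempt is to compare $\W_{\{\mathfrak q:\mathfrak q\supseteq\mathfrak p\}}$-type sets, but $V(\mathfrak p)$ is Thomason only when it is a union of sets $V(I)$ with $I\subseteq\mathfrak p$ finitely generated. So I would instead compare $V=\bigcup_{I\subseteq\mathfrak p\text{ f.g.}}V(I)$, which is all of $\Spec R$ once $I=0$ is allowed, with $V'=\Spec R\setminus\{\text{generizations of }\mathfrak p\}$. The latter is specialization-closed, and I need to show it is Thomason, or else replace it by a suitable Thomason subset whose preimage still covers $\Spec S$ using quasi-compactness of $\Spec S$ fibers. I would try first to exploit that $\pi^{-1}$ of the complement of the generizations is everything, and to use prime avoidance with finitely generated ideals not contained in $\mathfrak p$.

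The remaining claims follow from this. Detection \eqref{eq:detection} comes from applying the order embedding to $\wide_R(M)\le\W$, since $\varphi_!\wide_R(M)=\wide_S(S\otimes M)$. Faithfully flat maps have surjective $\pi$. For a nil ideal $J$, the map $\pi$ is a homeomorphism $\Spec R/J\cong\Spec R$, and Thomason subsets correspond because $V(I)\mapsto V(I+J/J)$ is surjective on finitely generated ideals, which lift. A finitely generated nilpotent $J$ gives $R/J$ finitely presented, hence coherent by Lemma~\ref{lem:quotient-modules}.
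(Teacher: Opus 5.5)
\paragraph{The missing converse.}
The genuine gap is the ``only if'' half of the order-embedding criterion. You show that surjectivity of $\pi$ makes $\pi^{-1}$ reflect inclusions. But for $\mathfrak p\notin\operatorname{im}\pi$ you never exhibit Thomason subsets $U\not\subseteq V$ with $\pi^{-1}U\subseteq\pi^{-1}V$, and neither of your candidates works.

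\emph{Your first choice} is vacuous. If $I\subseteq\mathfrak p$, every finitely generated $J\subseteq I$ has $\mathfrak p\in V(J)$, so $V'$ is empty.

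\emph{Your second choice} compares $V=\Spec R$ with $V'=\Spec R\setminus\{\mathfrak p':\mathfrak p'\subseteq\mathfrak p\}$. This needs $\pi^{-1}V'=\Spec S$, i.e.\ that no prime of $S$ contracts to a prime contained in $\mathfrak p$. That is much stronger than $\mathfrak p\notin\operatorname{im}\pi$ and false in general: for $k[x]\to k[x,x^{-1}]$ and $\mathfrak p=(x)$, the prime $(0)\subseteq\mathfrak p$ lies in the image. Moreover $V'$ need not be Thomason.

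\emph{The missing idea} is to turn the empty fibre into an element-level witness. The ring $S\otimes_R\kappa(\mathfrak p)\cong(R\setminus\mathfrak p)^{-1}(S/\mathfrak pS)$ has empty spectrum, hence is zero. So some $u\in R\setminus\mathfrak p$ has $\varphi(u)\in\mathfrak pS$. Write $\varphi(u)=\sum_{j=1}^r\varphi(a_j)s_j$ with $a_j\in\mathfrak p$, and put $I=(a_1,\ldots,a_r)$. Then $\pi^{-1}V(I)=V(IS)\subseteq V(uS)=\pi^{-1}V(u)$, while $\mathfrak p\in V(I)\setminus V(u)$. So inverse image fails to reflect inclusion already between two compact elements, and commutativity of the diagram transfers this failure to both maps $\varphi_!$. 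This is the paper's argument; no quasi-compactness or prime avoidance is needed.

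\paragraph{A smaller indexing point.}
In the right-hand square the paper uses homological indexing. There, right exactness gives $H_n(S_{\mathfrak q}\otimes_{R_{\mathfrak p}}P_{\mathfrak p})\cong S_{\mathfrak q}\otimes_{R_{\mathfrak p}}H_n(P_{\mathfrak p})$ for the \emph{smallest} $n$ with $H_n(P_{\mathfrak p})\ne0$, where the bounded free complex is split exact below $n$. It does not give this for the largest $n$, and there the isomorphism can fail. Take the Koszul complex on $x^2,xy$ over $k[x,y]_{(x,y)}$ and base change along $y\mapsto0$ to $k[x]_{(x)}$. The top homology $k[x,y]_{(x,y)}/(x)$ becomes $k$ after tensoring, whereas $H_1$ of the base-changed complex is $k[x]_{(x)}/(x^2)$.

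With $n$ chosen minimal, your Nakayama argument is correct. It is a valid elementary substitute for the paper's citation of Thomason's base-change lemma. Your remaining reductions to $\pi^{-1}$ on $\Thom(\Spec R)$, including detection and the nil-ideal case, agree with the paper.
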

\begin{proof}
We first compute the support of a base-changed module. Let $\mathfrak q\in\Spec S$ and put $\mathfrak p=\varphi^{-1}(\mathfrak q)$. If $M$ is finitely generated over $R$, then $(S\otimes_R M)_{\mathfrak q}$ is finitely generated over the local ring $S_{\mathfrak q}$. Nakayama's lemma gives
\[
 (S\otimes_R M)_{\mathfrak q}\ne0
 \quad\Longleftrightarrow\quad M\otimes_R\kappa(\mathfrak q)\ne0.
\]
On the other hand,
\[
 M\otimes_R\kappa(\mathfrak q)
 \cong (M_{\mathfrak p}/\mathfrak pM_{\mathfrak p})
       \otimes_{\kappa(\mathfrak p)}\kappa(\mathfrak q).
\]
Extension of a field detects whether a vector space is zero. Applying Nakayama's lemma over $R_{\mathfrak p}$ therefore shows that the last module is nonzero if and only if $M_{\mathfrak p}\ne0$. Consequently
\[
 \Supp_S(S\otimes_R M)=\pi^{-1}\Supp_R M.
\]

For perfect complexes, Thomason's base-change formula \cite[Lemma~3.3(b)]{Thomason} gives
\[
 \Supp_S(S\otimes_R^{\mathbf L}P)=\pi^{-1}\Supp_R P.
\]

By Theorem~\ref{thm:main} and Corollary~\ref{cor:compact}, the support of a generated wide subcategory is the union of the supports of its generators. The same assertion for a generated thick subcategory follows from~\eqref{eq:thomason}. Thus the two support identities give
\[
 \Supp_S(\varphi_!\W)=\pi^{-1}\Supp_R(\W)\quad\text{and}\quad
 \Supp_S(\varphi_!\T)=\pi^{-1}\Supp_R(\T).
\]
Theorem~\ref{thm:main} also gives $\Supp_R(f_R(\W))=\Supp_R(\W)$, and similarly over $S$. Therefore $f_S(\varphi_!\W)$ and $\varphi_!(f_R(\W))$ have the same support and are equal by~\eqref{eq:thomason}. This proves commutativity of~\eqref{eq:basechange}; its horizontal arrows are complete lattice isomorphisms by Theorem~\ref{thm:main}.

Inverse image preserves arbitrary unions and finite intersections, which are the corresponding joins and meets of Thomason subsets. Moreover, if $I$ is finitely generated, then $IS$ is finitely generated and $\pi^{-1}V(I)=V(IS)$. Corollary~\ref{cor:compact} therefore proves preservation of compact elements, as well as
\[
 \varphi_!\wide_R(R/I)=\wide_S(S/IS).
\]

Suppose that $\pi$ is surjective. If $U,V\subseteq\Spec R$ satisfy $\pi^{-1}U\subseteq\pi^{-1}V$, choose, for each $\mathfrak p\in U$, a prime $\mathfrak q$ with $\pi(\mathfrak q)=\mathfrak p$. Then $\mathfrak q\in\pi^{-1}V$, so $\mathfrak p\in V$. Hence inverse image reflects inclusion, and both maps $\varphi_!$ are order embeddings. Writing $U=\Supp_R(\W)$, the support classifications give
\[
 \begin{aligned}
 S\otimes_R M\in\varphi_!\W
 &\quad\Longleftrightarrow\quad
 \pi^{-1}\Supp_R M\subseteq\pi^{-1}U\\
 &\quad\Longleftrightarrow\quad \Supp_R M\subseteq U\\
 &\quad\Longleftrightarrow\quad M\in\W.
 \end{aligned}
\]
This proves~\eqref{eq:detection}. A faithfully flat homomorphism induces a surjection on spectra, so it satisfies this condition.

Conversely, suppose that $\mathfrak p\in\Spec R$ is not in the image of $\pi$. The primes of the fiber ring $S\otimes_R\kappa(\mathfrak p)$ correspond to primes of $S$ contracting to $\mathfrak p$. Thus the fiber ring has empty spectrum and is zero. Since this ring is $(R\setminus\mathfrak p)^{-1}(S/\mathfrak pS)$, there is some $u\in R\setminus\mathfrak p$ with $\varphi(u)\in\mathfrak pS$. Write
\[
 \varphi(u)=\sum_{j=1}^r\varphi(a_j)s_j
 \quad\text{with }a_j\in\mathfrak p,\ s_j\in S,
\]
and put $I=(a_1,\ldots,a_r)$. Then $I\subseteq\mathfrak p$ and $\varphi(u)\in IS$, so $V(IS)\subseteq V(uS)$. However, $\mathfrak p\in V(I)$ and $\mathfrak p\notin V(u)$, giving
\[
 \pi^{-1}V(I)\subseteq\pi^{-1}V(u)
 \quad \text{and} \quad V(I)\nsubseteq V(u).
\]
Both $V(I)$ and $V(u)$ are Thomason subsets. Thus inverse image fails to reflect inclusion on the Thomason lattice, and commutativity of~\eqref{eq:basechange} shows that neither map $\varphi_!$ is an order embedding.

Finally, since $J$ is nil, every prime ideal of $R$ contains $J$. The map $\Spec(R/J)\to\Spec R$ is therefore a homeomorphism. A homeomorphism preserves closed subsets with quasi-compact complement and their unions, so inverse image is an isomorphism of the Thomason lattices. If $R/J$ is coherent, the horizontal isomorphisms in~\eqref{eq:basechange} give the asserted isomorphisms for wide and thick subcategories. In particular, when $J$ is finitely generated and nilpotent, it is a nil ideal and $R/J$ is coherent, by \cite[Example~4.61(c)]{Lam}.
\end{proof}


To compare filtration lengths under change of rings, we first describe quotient-and-extension generation over a fixed commutative coherent ring $R$.

Write $\gen_R(\mathcal S)$ for the quotient objects of finite direct sums of objects of $\mathcal S$, with quotients taken in $\modcat R$. 

We record the elementary filtration operations used below; compare the quotient-and-extension construction in \cite[Lemma~3.1]{MS}. Filtrations in any abelian category are understood in the same sense as above.

\Needspace{8\baselineskip}
\begin{lemma}\label{lem:filtration-operations}
Let $\mathcal A$ be a full subcategory of an abelian category $\mathcal E$, containing $0$ and closed under finite direct sums and quotients. For $m,n\ge1$ the following hold.
\begin{enumerate}[label=\textup{(\arabic*)},leftmargin=*]
\item $\Filt_n(\mathcal A)$ is closed under finite direct sums and quotients.
\item An extension of an object of $\Filt_n(\mathcal A)$ by an object of $\Filt_m(\mathcal A)$ belongs to $\Filt_{m+n}(\mathcal A)$, and
\[
 \Filt_m(\Filt_n(\mathcal A))\subseteq\Filt_{mn}(\mathcal A).
\]
Consequently $\Filt(\mathcal A)$ is the smallest subcategory containing $\mathcal A$ and closed under quotients and extensions.
\item Let $F:\mathcal E\to\mathcal E'$ be an additive right-exact functor between abelian categories, and let $\mathcal B$ be a full subcategory of $\mathcal E'$ that is closed under quotients and contains every $F(A)$ with $A\in\mathcal A$. If $N\in\Filt_n(\mathcal A)$, then $F(N)\in\Filt_n(\mathcal B)$.
\end{enumerate}
\end{lemma}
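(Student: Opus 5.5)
All three parts will be proved by direct manipulation of filtrations, using only the abelian structure of $\mathcal E$ (pullbacks, pushouts and the isomorphism theorems). No commutative algebra is needed.

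\textbf{Part (1).} Let $N\in\Filt_n(\mathcal A)$ have filtration $N_0\subseteq\cdots\subseteq N_t$, and let $N'\in\Filt_n(\mathcal A)$ have filtration $N'_0\subseteq\cdots\subseteq N'_{t'}$. Pad the shorter filtration by repeating its top term; this only adds zero factors, which lie in $\mathcal A$. We may then assume $t=t'\le n$. The termwise sums $N_i\oplus N'_i$ filter $N\oplus N'$, with factors $N_i/N_{i-1}\oplus N'_i/N'_{i-1}\in\mathcal A$.

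For a quotient $p:N\twoheadrightarrow Q$, take the image filtration $p(N_i)$. Its factor $p(N_i)/p(N_{i-1})$ is a quotient of $N_i/N_{i-1}$, via the surjection $N_i\to p(N_i)/p(N_{i-1})$, which kills $N_{i-1}$. Hence each factor lies in $\mathcal A$.

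\textbf{Part (2).} Let $0\to L\to E\xrightarrow{q}N$ be exact, with $L\in\Filt_m(\mathcal A)$ and $N\in\Filt_n(\mathcal A)$. Concatenate the filtration of $L$, viewed inside $E$, with the preimages $q^{-1}(N_j)$. The factors are $L_i/L_{i-1}$ and $q^{-1}(N_j)/q^{-1}(N_{j-1})\cong N_j/N_{j-1}$, so $E$ has a filtration of length at most $m+n$.

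The bound $\Filt_m(\Filt_n(\mathcal A))\subseteq\Filt_{mn}(\mathcal A)$ then follows by induction on $m$. A module in $\Filt_m(\Filt_n(\mathcal A))$ is an extension of its top factor, which lies in $\Filt_n(\mathcal A)$, by its penultimate term, which lies in $\Filt_{m-1}(\Filt_n(\mathcal A))\subseteq\Filt_{(m-1)n}(\mathcal A)$. The extension bound gives length at most $(m-1)n+n=mn$.

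Consequently $\Filt(\mathcal A)$ is closed under extensions by the extension bound, and under quotients by (1). Conversely, any subcategory containing $\mathcal A$ and closed under extensions contains every finite filtration by objects of $\mathcal A$, by induction on the length. Thus $\Filt(\mathcal A)$ is the smallest subcategory with these closure properties.

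\textbf{Part (3).} This is the main point, because $F$ need not preserve monomorphisms, so the objects $F(N_i)$ are not subobjects of $F(N)$. Instead, set
\[
 M_i=\im\bigl(F(N_i)\to F(N)\bigr).
\]
This gives an increasing chain $0=M_0\subseteq\cdots\subseteq M_t$. Since $F$ is right exact and $N_t=N$, we have $M_t=F(N)$.

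It remains to show $M_i/M_{i-1}\in\mathcal B$. Apply right exactness to $N_{i-1}\to N_i\to N_i/N_{i-1}\to0$ to get an exact sequence
\[
 F(N_{i-1})\to F(N_i)\to F(N_i/N_{i-1})\to0.
\]
The surjection $F(N_i)\twoheadrightarrow M_i\twoheadrightarrow M_i/M_{i-1}$ kills the image of $F(N_{i-1})$, because that image maps into $M_{i-1}$. So it factors through a surjection $F(N_i/N_{i-1})\twoheadrightarrow M_i/M_{i-1}$.

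Since $N_i/N_{i-1}\in\mathcal A$, we have $F(N_i/N_{i-1})\in\mathcal B$. As $\mathcal B$ is closed under quotients, $M_i/M_{i-1}\in\mathcal B$. Hence $F(N)\in\Filt_n(\mathcal B)$.
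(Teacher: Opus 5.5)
Your proof is correct and follows essentially the same route as the paper. You use image filtrations and termwise sums for (1), preimage concatenation and iterated extension for (2), and for (3) the image filtration $\im(F(N_i)\to F(N))$, with right exactness giving surjections $F(N_i/N_{i-1})\twoheadrightarrow M_i/M_{i-1}$. The only slip is that your sequence $0\to L\to E\xrightarrow{q}N$ should end in $\to0$: surjectivity of $q$ is what gives $q^{-1}(N_j)/q^{-1}(N_{j-1})\cong N_j/N_{j-1}$.
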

\begin{proof}
\textup{(1).} Let $q:N\twoheadrightarrow Q$ be an epimorphism and choose a filtration $0=N_0\subseteq\cdots\subseteq N_n=N$ with factors in $\mathcal A$, inserting zero factors if necessary. Its image filtration has terms $q(N_i)$. At the first step, $q(N_1)$ is a quotient of $N_1$ and belongs to $\mathcal A$. If $n\ge2$, the surjection
\[
 N_2/N_1\twoheadrightarrow q(N_2)/q(N_1)
\]
shows that the next factor belongs to $\mathcal A$. Repeating this step gives the same conclusion for every factor, proving quotient closure. For finite direct sums, extend the given filtrations to length $n$ by inserting zero factors if necessary and take their direct sum term by term. Its successive factors are finite direct sums of objects of $\mathcal A$.

\textup{(2).} In an exact sequence $0\to L\to E\xrightarrow{p}Q\to0$, extend a filtration of $L$ by taking the inverse images under $p$ of a filtration of $Q$. The resulting factors are the original factors of $L$ and $Q$, so their lengths add. To prove the inclusion, start with a filtration $0=N_0\subseteq\cdots\subseteq N_m=N$ whose factors lie in $\Filt_n(\mathcal A)$. The first term $N_1$ has a filtration of length at most $n$. If $m\ge2$, lift a filtration of $N_2/N_1$ to $N_2$ and use it to extend the filtration of $N_1$, obtaining length at most $2n$. Repeating this step gives length at most $mn$. Thus $\Filt(\mathcal A)$ is closed under quotients by (1) and under extensions by the first assertion. Any extension-closed subcategory containing $\mathcal A$ contains every finite filtration by its objects, proving minimality.

\textup{(3).} Apply $F$ to a filtration $0=N_0\subseteq\cdots\subseteq N_n=N$ and put $L_i=\im(F(N_i)\to F(N))$. Then $L_0=0$ and $L_n=F(N)$. Right exactness supplies a surjection
\[
 F(N_i/N_{i-1})\twoheadrightarrow L_i/L_{i-1}
\]
at every step. Its source belongs to $\mathcal B$, so every factor belongs to $\mathcal B$ by closure under quotients. The image filtration therefore proves the assertion.
\end{proof}

For $M\in\modcat R$, define
\[
 b_R(M)=\min\{m\ge1:R/\Ann_R M\in\Filt_m(\gen_R(M))\},
\]
with $b_R(M)=\infty$ if the set is empty. The next corollary shows that it is finite.

\begin{corollary}\label{cor:quotient-generation}
For any family $\mathcal S\subseteq\modcat R$,
\begin{equation}\label{eq:quotient-generation}
 \begin{aligned}
 \tors_R(\mathcal S)&=\wide_R(\mathcal S)=\Filt(\gen_R(\mathcal S))\\
 &=\{N\in\modcat R:I_F^nN=0\text{ for some finite }F\subseteq\mathcal S,\ n\ge1\},
 \end{aligned}
\end{equation}
where $I_F$ is as in Proposition~\ref{prop:generation}. For $M\in\modcat R$, the integer $b_R(M)$ is finite, and for every $n\ge1$ one has
\begin{equation}\label{eq:filtration-comparison}
 \Filt_n(\gen_R(M))\ \subseteq\ \{N\in\modcat R:(\Ann_R M)^nN=0\}
 \ \subseteq\ \Filt_{b_R(M)n}(\gen_R(M)).
\end{equation}
The integer $b_R(M)$ is the least positive integer coefficient for which the right-hand inclusion holds for every $n\ge1$.
\end{corollary}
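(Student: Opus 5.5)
The plan is to prove the chain \eqref{eq:quotient-generation} by sandwiching, and then obtain \eqref{eq:filtration-comparison} from Lemma~\ref{lem:filtration-operations}(2) together with the annihilator description \eqref{eq:filtration}.

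\emph{The equalities \eqref{eq:quotient-generation}.} First, $\gen_R(\mathcal S)$ contains $0$ and is closed under finite direct sums and quotients. By Lemma~\ref{lem:filtration-operations}(2), $\Filt(\gen_R(\mathcal S))$ is the smallest subcategory containing $\mathcal S$ that is closed under quotients and extensions, so it equals $\tors_R(\mathcal S)$. Next, $\wide_R(\mathcal S)$ is a torsion class, since it is Serre by Theorem~\ref{thm:filters}; hence $\tors_R(\mathcal S)\subseteq\wide_R(\mathcal S)$. Conversely, $\tors_R(\mathcal S)$ is Serre by Theorem~\ref{thm:torsion-serre}, so it is wide, and $\wide_R(\mathcal S)\subseteq\tors_R(\mathcal S)$. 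The annihilator description is then \eqref{eq:wide-family}.

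\emph{Finiteness of $b_R(M)$.} Since $R/\Ann_R M\in\tors_R(M)=\Filt(\gen_R(M))$, some $m$ with $R/\Ann_R M\in\Filt_m(\gen_R(M))$ exists. I expect this to be the main point, because it uses the Serre property of torsion classes (Theorem~\ref{thm:torsion-serre}), which rests on the noetherian descent. The filtration statement alone does not directly produce $R/\Ann_R M$.

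\emph{The left inclusion of \eqref{eq:filtration-comparison}.} We have $\gen_R(M)\subseteq\ab_R(M)$, since quotients of $M^k$ are killed by $\Ann_R M$. Therefore $\Filt_n(\gen_R(M))\subseteq\Filt_n(\ab_R(M))$, and the latter equals $\{N:(\Ann_R M)^nN=0\}$ by \eqref{eq:filtration}.

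\emph{The right inclusion of \eqref{eq:filtration-comparison}.} Put $b=b_R(M)$. Every $N$ with $(\Ann_R M)N=0$ is a quotient of $(R/\Ann_R M)^t$. By Lemma~\ref{lem:filtration-operations}(1), $\Filt_b(\gen_R(M))$ is closed under finite sums and quotients, so it contains $\ab_R(M)$. Now take $N$ with $(\Ann_R M)^nN=0$. By \eqref{eq:filtration}, $N\in\Filt_n(\ab_R(M))\subseteq\Filt_n(\Filt_b(\gen_R(M)))$, and Lemma~\ref{lem:filtration-operations}(2) gives $\Filt_n(\Filt_b(\gen_R(M)))\subseteq\Filt_{bn}(\gen_R(M))$.

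\emph{Minimality of $b_R(M)$.} Suppose the right inclusion holds with some coefficient $c$ for every $n$. Taking $n=1$ and $N=R/\Ann_R M$ gives $R/\Ann_R M\in\Filt_c(\gen_R(M))$, so $c\ge b_R(M)$ by definition. Combined with the previous step, $b_R(M)$ is the least such coefficient.
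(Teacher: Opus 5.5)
Your proposal is correct and follows the paper's proof essentially step for step. You use Lemma~\ref{lem:filtration-operations}(2) for $\tors_R(\mathcal S)=\Filt(\gen_R(\mathcal S))$; where the paper cites Theorem~\ref{thm:main}(1) for the identification with $\wide_R(\mathcal S)$, you invoke directly Theorems~\ref{thm:filters} and~\ref{thm:torsion-serre}, from which that result is built; you then use Lemma~\ref{lem:filtration-operations}(1) for $\ab_R(M)\subseteq\Filt_{b_R(M)}(\gen_R(M))$ and test $n=1$, $N=R/\Ann_R M$ for minimality, exactly as the paper does.
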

\begin{proof}
Lemma~\ref{lem:filtration-operations}(2) gives $\tors_R(\mathcal S)=\Filt(\gen_R(\mathcal S))$. Theorem~\ref{thm:main}(1) identifies this torsion closure with $\wide_R(\mathcal S)$, and~\eqref{eq:wide-family} gives the annihilator description. In particular, $R/\Ann_R M\in\wide_R(M)=\Filt(\gen_R(M))$, so $b_R(M)$ is finite.

For the first inclusion in~\eqref{eq:filtration-comparison}, Proposition~\ref{prop:generation} gives $\gen_R(M)\subseteq\ab_R(M)$ and
\[
 \Filt_n(\gen_R(M))\subseteq\Filt_n(\ab_R(M))
 =\{N\in\modcat R:(\Ann_R M)^nN=0\}.
\]

For the reverse inclusion, put $b=b_R(M)$. Lemma~\ref{lem:filtration-operations}(1) shows that $\Filt_b(\gen_R(M))$ is closed under finite direct sums and quotients.

By definition, $R/\Ann_R M\in\Filt_b(\gen_R(M))$. If $L\in\ab_R(M)$, then $(\Ann_R M)L=0$ by~\eqref{eq:ab-generator}, so a finite generating set gives a surjection $(R/\Ann_R M)^u\twoheadrightarrow L$. The closure properties just proved yield
\[
 \ab_R(M)\subseteq\Filt_b(\gen_R(M)).
\]
If $(\Ann_R M)^nN=0$, equation~\eqref{eq:filtration} and Lemma~\ref{lem:filtration-operations}(2) now give
\[
 N\in\Filt_n(\ab_R(M))
 \subseteq\Filt_n\bigl(\Filt_b(\gen_R(M))\bigr)
 \subseteq\Filt_{bn}(\gen_R(M)).
\]
This proves the second inclusion in~\eqref{eq:filtration-comparison}.

Finally, suppose a positive integer $c$ can replace $b_R(M)$ in that inclusion for every $n\ge1$. Taking $n=1$ and $N=R/\Ann_R M$ gives $R/\Ann_R M\in\Filt_c(\gen_R(M))$. The definition of $b_R(M)$ then gives $b_R(M)\le c$, which proves the asserted minimality.
\end{proof}

The filtration length admits a description by the standard iteration of module traces; see, for example, \cite[Example~2.2 and Section~3.2.1]{Conde}. We check that its terms remain finitely presented in the present setting.
For $M,N\in\modcat R$, write
\[
 \operatorname{tr}_M(N)=\sum_{f\in\Hom_R(M,N)}f(M).
\]
Starting with $T_0^M(N)=0$, define $T_{i+1}^M(N)$ as the inverse image of
$\operatorname{tr}_M(N/T_i^M(N))$ under the quotient map from $N$.
When the ring needs to be specified, write $T_{i,R}^M(N)$.

\begin{proposition}\label{prop:trace-filtration}
For $M,N\in\modcat R$, every $T_i^M(N)$ is finitely presented, and for $n\ge1$,
\[
 N\in\Filt_n(\gen_R(M))\quad\Longleftrightarrow\quad T_n^M(N)=N.
\]
Consequently, 
\[
 b_R(M)=\min\{n\ge1:T_n^M(R/\Ann_R M)=R/\Ann_R M\}.
\]
\end{proposition}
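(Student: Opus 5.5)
We argue in three steps: finite presentation of the trace terms, the two directions of the equivalence, and then the formula for $b_R(M)$.

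\emph{Finite presentation.} We use induction on $i$, the case $T_0^M(N)=0$ being trivial. Suppose $T_i^M(N)$ is finitely presented, and put $Q=N/T_i^M(N)$. Then $Q$ lies in $\modcat R$ because $\modcat R$ is abelian. The point to check is that $\operatorname{tr}_M(Q)$ is finitely generated. Since $M$ is finitely presented, $\Hom_R(M,Q)$ is a submodule of $Q^s$, where $s$ is the number of generators of $M$. It is in fact a kernel of a map $Q^s\to Q^t$ coming from a presentation $R^t\to R^s\to M$, so by coherence it is finitely presented and in particular finitely generated. Choose generators $f_1,\dots,f_k$. Any $f$ is an $R$-combination $\sum r_jf_j$, and therefore $f(M)\subseteq\sum_j f_j(M)$. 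Hence $\operatorname{tr}_M(Q)$ is the image of $(f_1,\dots,f_k):M^k\to Q$, which is finitely presented by Proposition~\ref{prop:finite-submodules}. The inverse image $T_{i+1}^M(N)$ is then an extension of this image by $T_i^M(N)$. Equivalently, it is the kernel of $N\to Q/\operatorname{tr}_M(Q)$, so it is finitely presented.

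\emph{The equivalence.} The key observation is that for $Q\in\modcat R$, the trace $\operatorname{tr}_M(Q)$ is the largest submodule of $Q$ lying in $\gen_R(M)$. By the computation above it lies in $\gen_R(M)$. Conversely, if a submodule $L\subseteq Q$ is the image of some $M^u\to Q$, then each component map contributes to the trace, so $L\subseteq\operatorname{tr}_M(Q)$.

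For the direction ($\Leftarrow$), suppose $T_n^M(N)=N$. The chain $T_0^M(N)\subseteq\cdots\subseteq T_n^M(N)=N$ has factors $T_{i+1}^M(N)/T_i^M(N)=\operatorname{tr}_M(N/T_i^M(N))$, each in $\gen_R(M)$. This gives a filtration of length $n$.

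For the direction ($\Rightarrow$), take a filtration $0=N_0\subseteq\cdots\subseteq N_m=N$ with $m\le n$ and factors in $\gen_R(M)$. We show $N_i\subseteq T_i^M(N)$ by induction on $i$. Assume $N_i\subseteq T_i^M(N)$. The image of $N_{i+1}$ in $N/T_i^M(N)$ is a quotient of $N_{i+1}/N_i$. Since $\gen_R(M)$ is closed under quotients, this image lies in $\gen_R(M)$, so by maximality it is contained in the trace. Hence $N_{i+1}\subseteq T_{i+1}^M(N)$. Taking $i=m$ gives $N\subseteq T_m^M(N)\subseteq T_n^M(N)$, because the sequence $T_i^M(N)$ is increasing. (That the sequence increases is immediate from its definition as successive inverse images.)

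\emph{The formula for $b_R(M)$.} Apply the equivalence with $N=R/\Ann_R M$ and compare with the definition of $b_R(M)$. The main obstacle is the finite generation of $\operatorname{tr}_M(Q)$. This is what requires coherence, through the finite presentation of $\Hom_R(M,Q)$, since an arbitrary family of maps would otherwise only give a directed union of images.
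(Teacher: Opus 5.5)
Your proof is correct and follows essentially the same route as the paper: coherence makes $\Hom_R(M,Q)$ finitely generated, so the trace is the image of a single map $M^k\to Q$; the iterated traces then give the filtration for ($\Leftarrow$), and the inclusions $N_i\subseteq T_i^M(N)$ are proved inductively for ($\Rightarrow$). The only differences are cosmetic. You phrase the inductive step as maximality of the trace among submodules in $\gen_R(M)$, and you handle filtrations of length $m<n$ via monotonicity of $T_i^M(N)$ rather than by padding with zero factors.
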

\begin{proof}
We first show that the trace is finitely presented. Applying $\Hom_R(-,N)$ to a finite presentation $R^a\to R^b\to M\to0$ gives an exact sequence
\[
 0\longrightarrow\Hom_R(M,N)\longrightarrow N^b\longrightarrow N^a.
\]
Thus $\Hom_R(M,N)$ is finitely presented by coherence. Choose generators $f_1,\ldots,f_r$ of this $R$-module. Every map $f:M\to N$ is an $R$-linear combination of the $f_j$, so its image is contained in $\sum_j f_j(M)$. It follows that
\[
 \operatorname{tr}_M(N)=\sum_{j=1}^r f_j(M)
=\im\bigl(M^r\longrightarrow N,\ (m_1,\ldots,m_r)\longmapsto
 \textstyle\sum_j f_j(m_j)\bigr).
\]
This image is finitely generated, and hence finitely presented by coherence; it also belongs to $\gen_R(M)$.

In particular, $T_1^M(N)=\operatorname{tr}_M(N)$ is finitely presented and belongs to $\gen_R(M)$. Suppose $T_i^M(N)$ is finitely presented. Then $N/T_i^M(N)$ is finitely presented, and the preceding argument applies to its trace. By the definition of $T_{i+1}^M(N)$, there is an exact sequence
\[
 0\longrightarrow T_i^M(N)\longrightarrow T_{i+1}^M(N)
 \longrightarrow\operatorname{tr}_M(N/T_i^M(N))\longrightarrow0.
\]
Both end terms are finitely presented, so the middle term is finitely presented as well, and its quotient by $T_i^M(N)$ lies in $\gen_R(M)$. Repeating this step proves the assertions about all terms and successive factors. Consequently, if $T_n^M(N)=N$, then
\[
 0=T_0^M(N)\subseteq T_1^M(N)\subseteq\cdots\subseteq T_n^M(N)=N
\]
is a filtration showing that $N\in\Filt_n(\gen_R(M))$.

Conversely, let $N\in\Filt_n(\gen_R(M))$ and choose a filtration
$0=N_0\subseteq N_1\subseteq\cdots\subseteq N_n=N$, inserting zero factors if necessary. We show successively that $N_i\subseteq T_i^M(N)$. At the first step, $N_1\in\gen_R(M)$, so there is a surjection $M^s\twoheadrightarrow N_1$. Composing with $N_1\hookrightarrow N$ expresses $N_1$ as a sum of images of maps $M\to N$. Thus
\[
 N_1\subseteq\operatorname{tr}_M(N)=T_1^M(N).
\]
Now suppose $N_i\subseteq T_i^M(N)$, and let $B$ be the image of $N_{i+1}$ in $N/T_i^M(N)$. The map $N_{i+1}\to B$ kills $N_i$ and therefore induces a surjection $N_{i+1}/N_i\twoheadrightarrow B$. Since $N_{i+1}/N_i\in\gen_R(M)$, we also have $B\in\gen_R(M)$. Applying the first-step argument to the submodule $B\subseteq N/T_i^M(N)$ gives
\(
 B\subseteq\operatorname{tr}_M(N/T_i^M(N)).
\)
Taking inverse images in $N$ yields $N_{i+1}\subseteq T_{i+1}^M(N)$. Repeating the step until $i+1=n$ gives $N=N_n\subseteq T_n^M(N)\subseteq N$, and hence $T_n^M(N)=N$.

Applying this equivalence to $N=R/\Ann_R M$ gives the formula for $b_R(M)$. The minimum is finite by Corollary~\ref{cor:quotient-generation}.
\end{proof}

\begin{example}\label{ex:strict-filtration}
The factor $b_R(M)$ cannot always be replaced by $1$. To see this, let
\[
 R=k[x,y,z_1,z_2,\ldots],\quad M=(x,y),
\]
where $k$ is a field. The ring $R$ is coherent by \cite[Example~4.61(a)]{Lam}, and the strictly increasing chain $(z_1)\subsetneq(z_1,z_2)\subsetneq\cdots$ shows that it is not noetherian.

The presentation
\[
 R\xrightarrow{(-y,x)}R^2\xrightarrow{(x,y)}M\longrightarrow0
\]
shows that $M$ is finitely presented, and $\Ann_R M=0$ because $R$ is a domain and $M\ne0$. For any map $f:M\to R$, the relation $yx=xy$ in $M$ gives $yf(x)=xf(y)$. Reducing this equality modulo $x$ and using that $R/(x)$ is a domain gives $f(x)=xc$ for some $c\in R$. Substitution and cancellation of $x$ then give $f(y)=yc$. Hence every map $M\to R$ has image contained in $(x,y)$. The same holds for a map from any finite sum of copies of $M$, so no such map surjects onto $R$. Therefore $R\notin\gen_R(M)$, and $b_R(M)>1$.

On the other hand, the assignments $x\mapsto1+(x,y)$ and $y\mapsto0$ respect the relation $(-y,x)$ and define a surjection $M\twoheadrightarrow R/(x,y)$. Both factors in the filtration
\[
 0\subseteq M\subseteq R
\]
therefore belong to $\gen_R(M)$: the first is $M$, and the second is $R/M\cong R/(x,y)$. This gives $R\in\Filt_2(\gen_R(M))$, so $b_R(M)=2$. The example distinguishes the exact annihilator formula~\eqref{eq:filtration} from the comparison~\eqref{eq:filtration-comparison}.

\end{example}



We now compare these filtration lengths under change of rings.

\begin{corollary}\label{cor:filtration-basechange}
Let $\varphi:R\to S$ be a homomorphism of commutative coherent rings.
For $M,N\in\modcat R$ and $n\ge1$,
\begin{equation}\label{eq:filtration-basechange}
 N\in\Filt_n(\gen_R(M))
 \quad\Longrightarrow\quad
 S\otimes_R N\in\Filt_n(\gen_S(S\otimes_R M)).
\end{equation}
Moreover,
\[
 b_S(S\otimes_R M)\le b_R(M).
\]
If $\varphi$ is faithfully flat, the implication in
\eqref{eq:filtration-basechange} is an equivalence and this inequality is
an equality.
\end{corollary}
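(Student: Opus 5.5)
The implication \eqref{eq:filtration-basechange} follows from Lemma~\ref{lem:filtration-operations}(3) applied to $F=S\otimes_R-:\modcat R\to\modcat S$. This functor is additive and right exact, and it preserves finite presentation by tensoring a presentation. Take $\mathcal A=\gen_R(M)$ and $\mathcal B=\gen_S(S\otimes_R M)$. Then $\mathcal B$ is closed under quotients in $\modcat S$. If $A\in\gen_R(M)$, a surjection $M^s\twoheadrightarrow A$ becomes a surjection $(S\otimes_R M)^s\twoheadrightarrow S\otimes_R A$, so $F(A)\in\mathcal B$. The lemma then gives the implication.

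For the inequality, put $b=b_R(M)$, so $R/\Ann_R M\in\Filt_b(\gen_R(M))$. By the implication just proved, $S/(\Ann_R M)S\cong S\otimes_R R/\Ann_R M$ lies in $\Filt_b(\gen_S(S\otimes_R M))$. The ideal $(\Ann_R M)S$ annihilates $S\otimes_R M$, so it is contained in $\Ann_S(S\otimes_R M)$. Hence $S/\Ann_S(S\otimes_R M)$ is a quotient of $S/(\Ann_R M)S$, and Lemma~\ref{lem:filtration-operations}(1) keeps it in $\Filt_b$. This gives $b_S(S\otimes_R M)\le b$.

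In the faithfully flat case I would use the trace description in Proposition~\ref{prop:trace-filtration}. The key claim is that $S\otimes_R T^M_{i,R}(N)\cong T^{S\otimes M}_{i,S}(S\otimes_R N)$ as submodules of $S\otimes_R N$. For the trace this goes as follows. Since $M$ is finitely presented and $S$ is flat, $S\otimes_R\Hom_R(M,N)\cong\Hom_S(S\otimes M,S\otimes N)$. Choose generators $f_1,\dots,f_r$ of $\Hom_R(M,N)$; then $\operatorname{tr}_M(N)$ is the image of $M^r\to N$ given by the $f_j$. The extended maps $S\otimes f_j$ generate the $S$-module $\Hom_S$, so $\operatorname{tr}_{S\otimes M}(S\otimes N)$ is the image of the base-changed map $(S\otimes M)^r\to S\otimes N$. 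By flatness this image equals $S\otimes_R\operatorname{tr}_M(N)$. Induction on $i$ gives the claim, using flatness to commute $S\otimes-$ with quotients and inverse images: $S\otimes(N/T_i)\cong(S\otimes N)/(S\otimes T_i)$, and the inverse image of $S\otimes\operatorname{tr}$ is $S\otimes T_{i+1}$.

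With the claim in hand, faithful flatness gives $T_n(N)=N$ if and only if $S\otimes(N/T_n(N))=0$, that is, if and only if $T_n(S\otimes N)=S\otimes N$. This yields the equivalence in \eqref{eq:filtration-basechange}. For the equality of the invariants, flatness gives $\Ann_S(S\otimes M)=(\Ann_R M)S$: the annihilator is the kernel in \eqref{eq:ann-kernel}, and $S\otimes-$ preserves kernels. So $S\otimes_R R/\Ann_R M\cong S/\Ann_S(S\otimes M)$, and the equivalence applied to $N=R/\Ann_R M$ gives $b_S(S\otimes M)=b_R(M)$ via the formula in Proposition~\ref{prop:trace-filtration}. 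The main obstacle is this trace compatibility, specifically the Hom base change for finitely presented $M$ over a flat extension; everything else is formal.
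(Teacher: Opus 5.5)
Your proposal is correct and follows the paper's proof essentially step for step. You use Lemma~\ref{lem:filtration-operations}(3) for the implication, and the surjection $S/(\Ann_R M)S\twoheadrightarrow S/\Ann_S(S\otimes_R M)$ with Lemma~\ref{lem:filtration-operations}(1) for the inequality. In the faithfully flat case you use flat base change of $\Hom$, hence of traces and iterated traces, together with $\Ann_S(S\otimes_R M)=(\Ann_R M)S$. The only cosmetic difference is that you write the trace as the image of $M^r\to N$ built from finitely many generators of $\Hom_R(M,N)$, whereas the paper uses the image of the evaluation map $\Hom_R(M,L)\otimes_R M\to L$.
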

\begin{proof}
Put $M_S=S\otimes_R M$ and $N_S=S\otimes_R N$. The functor $S\otimes_R-$ is additive and right exact from $\modcat R$ to $\modcat S$. It sends $\gen_R(M)$ into $\gen_S(M_S)$, since it preserves finite direct sums and epimorphisms. Applying Lemma~\ref{lem:filtration-operations}(3) gives~\eqref{eq:filtration-basechange}.

To prove the inequality, put $b=b_R(M)$. The defining filtration of $R/\Ann_R M$ has at most $b$ factors in $\gen_R(M)$. Applying the implication just proved gives
\[
 S/\bigl((\Ann_R M)S\bigr)\cong S\otimes_R(R/\Ann_R M)
 \in\Filt_b(\gen_S(M_S)).
\]
Every element of $(\Ann_R M)S$ annihilates $M_S$, so $(\Ann_R M)S\subseteq\Ann_S M_S$ and there is a surjection
\[
 S/\bigl((\Ann_R M)S\bigr)\twoheadrightarrow S/\Ann_S M_S.
\]
Lemma~\ref{lem:filtration-operations}(1) therefore gives $S/\Ann_S M_S\in\Filt_b(\gen_S(M_S))$, and hence $b_S(M_S)\le b_R(M)$.

Now assume that $S$ is flat over $R$. We use the standard compatibility of traces with flat base change; compare \cite[Lemma~1.5(iii)]{HHS} for trace ideals. For a finitely presented $R$-module $L$, choose a finite presentation $R^a\to R^b\to M\to0$. Applying $\Hom_R(-,L)$ gives
\[
 0\longrightarrow\Hom_R(M,L)\longrightarrow L^b\longrightarrow L^a.
\]
Tensoring this sequence with $S$ preserves its kernel by flatness. Comparing it with the sequence obtained by applying $\Hom_S(-,S\otimes_R L)$ to the base-changed presentation of $M$ proves that the natural map
\[
 S\otimes_R\Hom_R(M,L)
 \longrightarrow \Hom_S(M_S,S\otimes_R L)
\]
is an isomorphism. The trace $\operatorname{tr}_M(L)$ is the image of the evaluation map $\Hom_R(M,L)\otimes_R M\to L$. Under the preceding isomorphism, its base change is the evaluation map for $M_S$ and $S\otimes_R L$. Since flat tensoring preserves images, we obtain
\[
 S\otimes_R\operatorname{tr}_M(L)
 =\operatorname{tr}_{M_S}(S\otimes_R L)
\]
as submodules of $S\otimes_R L$.

We next compare the iterated traces. Both filtrations start with zero, and the trace identity for $L=N$ gives equality at the first step. Suppose that equality has been proved at step $i$. Flatness then gives an identification
\[
 S\otimes_R(N/T_{i,R}^M(N))
 \cong N_S/T_{i,S}^{M_S}(N_S).
\]
Apply the trace identity to $L=N/T_{i,R}^M(N)$. By definition, $T_{i+1,R}^M(N)$ is the inverse image of $\operatorname{tr}_M(L)$ under $N\to L$, or equivalently the kernel of $N\to L/\operatorname{tr}_M(L)$. Tensoring this kernel description and using flatness identifies its image in $N_S$ with $T_{i+1,S}^{M_S}(N_S)$. Repeating this step proves
\[
 S\otimes_R T_{i,R}^M(N)=T_{i,S}^{M_S}(N_S)
 \quad(i\ge0).
\]

If $S$ is faithfully flat, an $R$-module is zero if and only if its tensor product with $S$ is zero. Hence
\[
 \begin{aligned}
 T_{n,S}^{M_S}(N_S)=N_S
 &\quad\Longleftrightarrow\quad
 S\otimes_R(N/T_{n,R}^M(N))=0\\
 &\quad\Longleftrightarrow\quad T_{n,R}^M(N)=N.
 \end{aligned}
\]
Proposition~\ref{prop:trace-filtration} now gives the reverse implication in~\eqref{eq:filtration-basechange}.

Finally, choose generators $m_1,\ldots,m_s$ of $M$. Tensoring~\eqref{eq:ann-kernel} with the flat module $S$ identifies $(\Ann_R M)S$ with the kernel of
\[
 S\longrightarrow M_S^s,\quad
 a\longmapsto(a(1\otimes m_1),\ldots,a(1\otimes m_s)).
\]
Since $1\otimes m_1,\ldots,1\otimes m_s$ generate $M_S$, this kernel is $\Ann_S M_S$. Thus $\Ann_S M_S=(\Ann_R M)S$. Applying the filtration equivalence to $N=R/\Ann_R M$, whose base change is $S/\Ann_S M_S$, shows that the same positive integers occur in the two definitions of $b_R(M)$ and $b_S(M_S)$. Therefore,
$b_R(M)=b_S(M_S)$.  
\end{proof}

The inequality can be strict even when the spectra are homeomorphic. Let $R=k[x,y]/(x,y)^2$ over a field $k$, put $\mathfrak m=(x,y)$, and take $M=R/(x)\oplus R/(y)$. As $k$-vector spaces, $(x)=kx$ and $(y)=ky$, so their intersection is zero. Hence
\[
 \Ann_R M=(x)\cap(y)=0.
\]
A map $R/(x)\to R$ is determined by the image of $1+(x)$, which can be any element of $\Ann_R(x)=\mathfrak m$. Its possible images therefore sum to $\mathfrak m$. The same argument applies to $R/(y)$, and thus
\[
 T_1^M(R)=\operatorname{tr}_M(R)=\mathfrak m\ne R.
\]
The natural surjection $R/(x)\twoheadrightarrow R/\mathfrak m$ shows that $\operatorname{tr}_M(R/\mathfrak m)=R/\mathfrak m$. At the second step, the inverse image of this trace under $R\to R/\mathfrak m$ is all of $R$, so $T_2^M(R)=R$. Proposition~\ref{prop:trace-filtration} gives $b_R(M)=2$. In contrast, for the nilpotent quotient $S=R/\mathfrak m=k$, we have $S\otimes_R M\cong S^2$. Projection onto either summand gives $S\in\gen_S(S^2)$, and hence $b_S(S\otimes_R M)=1$. Proposition~\ref{prop:basechange} gives lattice isomorphisms for this quotient, but the filtration length has decreased.

\section{Arbitrary modules and the limits of finite reconstruction}\label{sec:large}

We now examine how much a wide subcategory of arbitrary modules is determined by its finitely presented objects. The starting point is the Garkusha--Prest correspondence recalled in Theorem~\ref{thm:main}(1). We first record its element and support descriptions, then compare its image with all wide subcategories closed under arbitrary sums.


For $\W\in\Wide(\modcat R)$, write $\overrightarrow{\W}$ for its closure under filtered colimits in $\Mod R$. Let $\mathcal F$ be the multiplicative filter corresponding to $\W$ in Theorem~\ref{thm:filters}, and put
\[
 V_{\mathcal F}=\bigcup_{I\in\mathcal F}V(I),\quad
 \widehat{\mathcal F}
 =\{J\lhd R:I\subseteq J\text{ for some }I\in\mathcal F\}.
\]
Here $\mathcal F$ consists of finitely generated ideals, whereas $\widehat{\mathcal F}$ is its upward closure among all ideals. By \cite[Proposition~2.1]{GPtorsion}, $\widehat{\mathcal F}$ is a Gabriel filter of finite type. For a hereditary torsion class, its \emph{torsion radical} assigns to each module its largest submodule in that class; see \cite[Chapter~VI]{Stenstrom} and \cite[Appendix]{GPtorsion}.

\begin{lemma}[{\cite[Corollary~2.4]{GP} and \cite[Theorem~2.2]{GPtorsion}}]\label{lem:torsion}
With the notation above,
\begin{align}
 \overrightarrow{\W}
 &=\{X\in\Mod R:\text{for every }x\in X\text{ there is }I\in\mathcal F\text{ with }Ix=0\}\label{eq:torsion-elements}\\
 &=\{X\in\Mod R:\Supp_R X\subseteq V_{\mathcal F}\}.\label{eq:torsion-support}
\end{align}
This is the smallest hereditary torsion class containing $\W$, and its torsion radical is
\[
 t_{\mathcal F}(X)=\{x\in X:Ix=0\text{ for some }I\in\mathcal F\}.
\]
\end{lemma}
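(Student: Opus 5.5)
Write $\mathcal U_{\mathcal F}$ for the class in~\eqref{eq:torsion-elements} and $\mathcal V$ for the class in~\eqref{eq:torsion-support}. The plan is to prove the chain of equalities $\overrightarrow{\W}=\mathcal U_{\mathcal F}=\mathcal V$ and then read off minimality and the radical. Although the lemma is attributed to Garkusha--Prest, I would reprove the elementwise and support descriptions directly from Theorem~\ref{thm:filters}, and cite \cite[Theorem~2.2]{GPtorsion} only where a general fact about Gabriel filters of finite type is convenient.

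First I show that $\mathcal U_{\mathcal F}$ is a hereditary torsion class. Closure under submodules, quotients and direct sums is immediate: for a direct sum, each element involves finitely many components, and I use closure of $\mathcal F$ under finite intersections. For extensions $0\to X'\to X\to X''\to0$ with $x\in X$, choose $I\in\mathcal F$ with $Ix\subseteq X'$. Since $I$ is finitely generated, $Ix$ is generated by finitely many elements of $X'$, each killed by some ideal of $\mathcal F$. Intersecting these gives $J\in\mathcal F$ with $JIx=0$, and $JI\in\mathcal F$ because $\mathcal F$ is multiplicative by Theorem~\ref{thm:filters}. This is the one point where wideness (rather than mere abelianness) enters, and I regard it as the crux.

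Next I prove $\overrightarrow{\W}\subseteq\mathcal U_{\mathcal F}$. Every $M\in\W$ lies in $\mathcal U_{\mathcal F}$, since $\Ann_R M\in\mathcal F$ by~\eqref{eq:ann-test}. Moreover $\mathcal U_{\mathcal F}$ is closed under filtered colimits, being closed under direct sums and quotients. For the reverse inclusion, take $X\in\mathcal U_{\mathcal F}$. Then $X$ is the filtered union of its finitely generated submodules $Y=Rx_1+\dots+Rx_n$, and each such $Y$ is killed by some $I\in\mathcal F$. Writing $Y$ as a quotient of $(R/I)^n$, the module $(R/I)^n$ lies in $\W$ and $Y$ is a filtered colimit of finitely presented quotients $(R/I)^n/K$ with $K$ finitely generated. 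These quotients are killed by $I$, so they lie in $\W$ by Lemma~\ref{lem:quotient-modules}. Since a filtered colimit of filtered colimits of objects of $\W$ is again one (pass to finitely presented approximations in the standard way), $X\in\overrightarrow{\W}$. Minimality follows: any hereditary torsion class containing $\W$ contains each $(R/I)^n$ with $I\in\mathcal F$, hence its quotients, hence all of $\mathcal U_{\mathcal F}$. The formula for $t_{\mathcal F}$ is then immediate, because the displayed set is a submodule lying in $\mathcal U_{\mathcal F}$ and it contains every submodule of $X$ that lies in $\mathcal U_{\mathcal F}$.

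Finally I prove the support description $\mathcal U_{\mathcal F}=\mathcal V$. If $Ix=0$ with $I\in\mathcal F$, then $\Supp Rx=V(\Ann_R x)\subseteq V(I)\subseteq V_{\mathcal F}$, and since $\Supp_R X$ is the union of the $\Supp_R Rx$, this gives one inclusion. Conversely, if $\Supp_R X\subseteq V_{\mathcal F}$, fix $x\in X$ and set $J=\Ann_R x$. Then $V(J)\subseteq\bigcup_{I\in\mathcal F}V(I)$. The key subclaim is that this forces $I^n\subseteq J$ for some $I\in\mathcal F$ and some $n$, after which upward closure of $\widehat{\mathcal F}$ and multiplicativity give $I^n\in\mathcal F$ with $I^n x=0$. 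The expected obstacle is that $V(J)$ need not be quasi-compact, since $J$ need not be finitely generated. The remedy I would try first is to note that $V(J)$ is an intersection of the closed sets $V(J_0)$ over finitely generated $J_0\subseteq J$, and that the complement of each $V(I)$ is quasi-compact, so a compactness argument in the constructible topology yields a finite subcover. I expect this step may be cleaner to take from \cite[Theorem~2.2]{GPtorsion}.
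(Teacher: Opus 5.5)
Your argument is correct, but it takes a different route from the paper. The paper's proof relies on the literature. It takes from \cite[Corollary~2.4]{GP} that $\overrightarrow{\W}$ is a hereditary torsion class with $\overrightarrow{\W}\cap\modcat R=\W$. It then notes that the finitely generated ideals in its Gabriel filter are exactly $\mathcal F$, so that filter is $\widehat{\mathcal F}$. The elementwise description and the radical are read off from the Gabriel-filter dictionary, and the support description from \cite[Theorem~2.2]{GPtorsion}. Minimality is just closure of hereditary torsion classes under filtered colimits.

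You instead build $\mathcal U_{\mathcal F}$ by hand from the multiplicative filter of Theorem~\ref{thm:filters}, and you prove $\mathcal U_{\mathcal F}=\overrightarrow{\W}$ using only Lemma~\ref{lem:quotient-modules}. In effect you reprove the needed part of \cite[Corollary~2.4]{GP} rather than importing it. This buys self-containedness and pinpoints where wideness is used: only for extension closure, which is the Gabriel condition in disguise. The paper's route is shorter but opaque on this point.

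Your support step works, though your diagnosis of the obstacle is slightly off. $V(J)$ is always quasi-compact in the Zariski topology, being closed in $\Spec R$; the real difficulty is that the covering sets $V(I)$ are closed, not open. Your constructible-topology remedy handles exactly this. There $V(J)$ is closed, hence compact, and each $V(I)$ with $I$ finitely generated is clopen. So $V(J)\subseteq V(I_1)\cup\dots\cup V(I_k)=V(I_1\cdots I_k)$ with $I_1\cdots I_k\in\mathcal F$, and some power of this finitely generated ideal lies in $J$.

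A topology-free alternative: if no member of $\mathcal F$ lay in $J$, Zorn's lemma would give an ideal $\mathfrak p\supseteq J$ maximal among those containing no member of $\mathcal F$. It exists because members of $\mathcal F$ are finitely generated, and it is prime because $\mathcal F$ is multiplicative. This contradicts $V(J)\subseteq V_{\mathcal F}$.

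Similarly, your ``filtered colimit of filtered colimits'' step is cleanest via the standard factorization criterion. A map from a finitely presented $P$ to $X\in\mathcal U_{\mathcal F}$ has finitely generated image killed by some $I\in\mathcal F$, so it factors through $P/IP\in\W$.
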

\begin{proof}
The finitely generated ideals in the Gabriel filter attached to $\overrightarrow{\W}$ are precisely those $I$ with $R/I\in\W$, hence are exactly $\mathcal F$. Thus that Gabriel filter is $\widehat{\mathcal F}$, and its element and support descriptions give the displayed formulas. Minimality follows because every hereditary torsion class is closed under filtered colimits.
\end{proof}

Over a noetherian ring every hereditary torsion class is of finite type \cite[p.~768]{GP}, so this correspondence recovers the torsion-class part of \cite[Theorem~A]{Takahashi}. Example~\ref{ex:boundaries} below illustrates why finite type is needed in the coherent case.

Over left noetherian rings, Angeleri H\"ugel--Sentieri show that directed-colimit closure preserves wide subcategories of finitely presented modules and recovers the original subcategory on restriction \cite[Theorem~4.6]{AHS}. We describe the image of this construction and its relation to arbitrary wide subcategories closed under sums in the present commutative coherent setting.

Let $\Wide_{\oplus}(\Mod R)$ denote the lattice of wide subcategories closed under arbitrary direct sums. For $\W\in\Wide(\modcat R)$ and $\C\in\Wide_{\oplus}(\Mod R)$, set
\[
 \lambda_R(\W)=\overrightarrow{\W},\quad
 \rho_R(\C)=\C\cap\modcat R.
\]

\begin{lemma}\label{lem:large-adjunction}
The order-preserving maps $\lambda_R$ and $\rho_R$ fit into the diagram
\[
 \begin{tikzcd}[column sep=huge,row sep=large]
 \Wide(\modcat R)
   \arrow[r,shift left=.7ex,"\lambda_R"]
   \arrow[dr,"\W\mapsto\overrightarrow{\W}"',"\sim"]
 & \Wide_{\oplus}(\Mod R)
   \arrow[l,shift left=.7ex,"\rho_R"]
 \\
 & \Torsft(\Mod R)\arrow[u,hook]
 \end{tikzcd}
\]
where $\lambda_R$ is the composite of the diagonal lattice isomorphism and the vertical inclusion. The horizontal maps form an adjoint pair:
\begin{equation}\label{eq:large-adjunction}
 \lambda_R(\W)\subseteq\C
 \quad\Longleftrightarrow\quad
 \W\subseteq\rho_R(\C),\quad \rho_R\lambda_R=\mathrm{id}.
\end{equation}
The subcategory
\[
 \lambda_R\rho_R(\C)=\overrightarrow{\C\cap\modcat R}\subseteq\C
\]
is the largest finite-type hereditary torsion class contained in $\C$. The equality $\lambda_R\rho_R(\C)=\C$ holds precisely when $\C$ is itself a finite-type hereditary torsion class, equivalently when it is generated by its finitely presented objects using kernels, cokernels, extensions and arbitrary direct sums.
\end{lemma}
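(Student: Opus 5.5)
First I check that the maps make sense. By Theorem~\ref{thm:main}(1), $\W\mapsto\overrightarrow{\W}$ is a lattice isomorphism $\Wide(\modcat R)\to\Torsft(\Mod R)$ with inverse $\mathcal U\mapsto\mathcal U\cap\modcat R$. A hereditary torsion class is closed under submodules, quotients, extensions and arbitrary sums, so it is in particular a wide subcategory closed under sums. Hence $\Torsft(\Mod R)\subseteq\Wide_{\oplus}(\Mod R)$, and $\lambda_R$ is well defined and equals the composite in the diagram. For $\rho_R$, take $\C\in\Wide_{\oplus}(\Mod R)$. Kernels, cokernels and extensions computed in $\modcat R$ agree with those in $\Mod R$, because $\modcat R$ is an exact abelian subcategory when $R$ is coherent. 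So $\C\cap\modcat R$ is wide in $\modcat R$. Both maps clearly preserve inclusion. The identity $\rho_R\lambda_R=\mathrm{id}$ is exactly the statement that the inverse of the Garkusha--Prest isomorphism is intersection with $\modcat R$.

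Next the adjunction \eqref{eq:large-adjunction}. If $\lambda_R(\W)\subseteq\C$, intersecting with $\modcat R$ gives $\W=\rho_R\lambda_R(\W)\subseteq\rho_R(\C)$. The converse is the main step. Suppose $\W\subseteq\C\cap\modcat R$; I must show $\overrightarrow{\W}\subseteq\C$. I will use the element description \eqref{eq:torsion-elements}. Take $X\in\overrightarrow{\W}$ and $x\in X$. Then $Ix=0$ for some $I\in\mathcal F$, so $Rx$ is a quotient of $R/I\in\W\subseteq\C$. I will then write $X=\sum_{x}Rx$ as the image of $\bigoplus_x Rx$. Here I need $\C$ to be closed under arbitrary sums and cokernels; images are cokernels of kernels, and $\C$ is wide in $\Mod R$. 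It remains to show $Rx\in\C$. The worry is that a wide subcategory need not be closed under quotients. However, $Rx\cong R/\Ann_R(x)$ is a filtered colimit (directed union of quotients) of the modules $R/J$ with $J\subseteq\Ann_R(x)$ and $J\in\mathcal F$ finitely generated. Concretely, $R/\Ann_R(x)=\coker\bigl(\bigoplus_J R/I\to R/I\bigr)$, a cokernel of a map from a direct sum of copies of $R/I$ given by multiplication by generators of $\Ann_R(x)$. So I avoid quotient closure: $\Ann_R(x)/I$ is the image of $\bigoplus_{a\in\Ann_R(x)} R/I\xrightarrow{(a)} R/I$, and that map lies in $\C$ since each source summand $R/I$ is in $\C$. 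The cokernel $R/\Ann_R(x)$ is therefore in $\C$. I expect this avoidance of quotient closure to be the delicate point. The same cokernel trick then gives $X$ itself: $X$ is the cokernel of a map between sums of cyclic modules of this form.

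Finally the statements about $\lambda_R\rho_R$. The adjunction with $\W=\rho_R(\C)$ gives $\lambda_R\rho_R(\C)\subseteq\C$, and this class lies in $\Torsft(\Mod R)$. If $\mathcal U\in\Torsft(\Mod R)$ with $\mathcal U\subseteq\C$, then $\mathcal U=\lambda_R(\mathcal U\cap\modcat R)$ by Theorem~\ref{thm:main}(1). Since $\mathcal U\cap\modcat R\subseteq\rho_R(\C)$, monotonicity gives $\mathcal U\subseteq\lambda_R\rho_R(\C)$, which proves maximality. Equality $\lambda_R\rho_R(\C)=\C$ therefore holds precisely when $\C$ is of finite type. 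For the last equivalence, the argument of the second paragraph shows that the smallest subcategory containing $\C\cap\modcat R$ and closed under kernels, cokernels, extensions and arbitrary sums contains $\overrightarrow{\C\cap\modcat R}$. Conversely, $\overrightarrow{\C\cap\modcat R}$ has these closure properties, so the two coincide. Hence $\C$ is generated by its finitely presented objects exactly when $\C=\lambda_R\rho_R(\C)$.
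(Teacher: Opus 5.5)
Your proof is correct and follows the paper's outline: well-definedness and $\rho_R\lambda_R=\mathrm{id}$ come from Theorem~\ref{thm:main}(1), maximality uses the same one-line argument, and the generation criterion is read off from the adjunction. The difference is in how you prove the key inclusion $\overrightarrow{\W}\subseteq\C$.

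The paper argues formally. A subcategory of $\Mod R$ closed under arbitrary direct sums and cokernels is closed under all colimits of diagrams in it, because such a colimit is the cokernel of a map $\bigoplus_{i\to j}D(i)\to\bigoplus_i D(i)$. Since $\overrightarrow{\W}$ consists of filtered colimits of objects of $\W\subseteq\C$, the inclusion is immediate, and the quotient-closure worry never arises.

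You instead use the element description~\eqref{eq:torsion-elements} from Lemma~\ref{lem:torsion} and build explicit presentations. You realize $R/\Ann_R(x)$ as the cokernel of $\bigoplus_{a\in\Ann_R(x)}R/I\xrightarrow{(a)}R/I$, and then $X$ as a cokernel of a map between sums of such cyclic modules. This is more concrete: it gives each object of $\overrightarrow{\W}$ a two-step presentation built from the modules $R/I$ with $I\in\mathcal F$. The cost is that it relies on the Garkusha--Prest element description, whereas the paper's argument uses only the definition of $\overrightarrow{\W}$. Your cokernel trick is in fact a special case of the colimit-as-cokernel principle.

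One point to tighten. You say $X$ lies in $\C$ because it is the image of $\bigoplus_x Rx\to X$ and images are cokernels of kernels. That does not work as written. The target $X$ is not yet known to be in $\C$, so the kernel $K$ of that surjection is not a priori in $\C$.

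Your final sentence supplies the right fix, but you should say why it applies:
\begin{itemize}
\item Every $k\in K\subseteq\bigoplus_x Rx$ has finitely many nonzero components, each killed by some ideal of $\mathcal F$.
\item So $k$ is killed by their intersection (or product), which lies in $\mathcal F$.
\item Hence $Rk\in\C$ by your trick.
\item Therefore $X\cong\coker\bigl(\bigoplus_{k\in K}Rk\to\bigoplus_x Rx\bigr)\in\C$.
\end{itemize}
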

\begin{proof}
First, since $R$ is coherent, we have that $\rho_R(\C)$ is wide, and Theorem~\ref{thm:main}(1) gives $\lambda_R(\W)\in\Torsft(\Mod R)$ and $\rho_R\lambda_R=\mathrm{id}$. Moreover,  a subcategory closed under arbitrary sums and cokernels is closed under colimits. Thus $\W\subseteq\C$ implies $\overrightarrow{\W}\subseteq\C$; the reverse implication follows from $\W\subseteq\overrightarrow{\W}$. This proves the adjunction.

In particular, $\lambda_R\rho_R(\C)\subseteq\C$. For any $\mathcal U\in\Torsft(\Mod R)$ contained in $\C$, Theorem~\ref{thm:main}(1) gives
\[
 \mathcal U=\lambda_R(\mathcal U\cap\modcat R)
 \subseteq\lambda_R\rho_R(\C).
\]
This proves maximality and the first equality criterion. The adjunction also identifies $\lambda_R(\W)$ with the smallest wide subcategory closed under arbitrary sums and containing $\W$. Taking $\W=\rho_R(\C)$ gives the stated generation criterion.
\end{proof}


To compare this adjunction with \cite{Krause}, assume that $R$ is noetherian. For an $R$-module $M$, regarded as a complex concentrated in degree zero, define its \emph{small support} by
\[
 \supp_R M=\{\mathfrak p\in\Spec R:M\otimes_R^{\mathbf L}\kappa(\mathfrak p)\not\simeq0\}.
\]
For finitely generated $M$, one has $\supp_R M=\Supp_R M$ by \cite[Section~3, p.~737]{Krause}; these supports can differ for arbitrary modules. Following \cite[Section~3]{Krause}, a subset $\Phi\subseteq\Spec R$ is \emph{coherent} if every morphism $I^0\to I^1$ of injective modules with $\Ass_R I^i\subseteq\Phi$ extends to an exact sequence $I^0\to I^1\to I^2$ with $I^2$ injective and $\Ass_R I^2\subseteq\Phi$. Here $\Ass_R$ is the ordinary associated-prime set defined in the introduction. Krause's classification \cite[Theorem~3.1]{Krause} identifies such subsets with
\[
 \C_{\Phi}=\{X\in\Mod R:\supp_R X\subseteq\Phi\}
 \in\Wide_{\oplus}(\Mod R).
\]
The following consequence of that classification and Lemma~\ref{lem:large-adjunction} describes exactly what restriction to finitely presented modules retains.

For a subset $\Phi\subseteq\Spec R$, put
\[
 \Phi^{\circ}_{\mathrm{sp}}
 =\{\mathfrak p\in\Spec R:V(\mathfrak p)\subseteq\Phi\},
\]
the largest specialization-closed subset contained in $\Phi$.

\begin{proposition}\label{prop:krause-trace}
Let $R$ be commutative and noetherian, and let $\Phi$ be coherent in Krause's sense. Then
\begin{align}
 \rho_R(\C_{\Phi})
 &=\{M\in\modcat R:\Supp_R M\subseteq\Phi^{\circ}_{\mathrm{sp}}\},\label{eq:finite-trace}\\
 \lambda_R\rho_R(\C_{\Phi})
 &=\{X\in\Mod R:\Supp_R X\subseteq\Phi^{\circ}_{\mathrm{sp}}\}.\label{eq:torsion-part}
\end{align}
In particular, $\C_{\Phi}$ is generated by its finitely presented objects under kernels, cokernels, extensions and arbitrary sums if and only if $\Phi$ is specialization-closed. The map $\rho_R$ on all of $\Wide_{\oplus}(\Mod R)$ is injective if and only if $R$ is Artinian.
\end{proposition}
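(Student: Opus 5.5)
The plan is to compute $\rho_R(\C_\Phi)$ directly from Krause's description and then apply Lemma~\ref{lem:large-adjunction} together with the support description of Lemma~\ref{lem:torsion}.

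\textbf{Step 1: the restriction $\rho_R(\C_\Phi)$.} For $M\in\modcat R$ we have $\supp_R M=\Supp_R M$, so $M\in\C_\Phi$ exactly when $\Supp_R M\subseteq\Phi$. Since $R$ is noetherian, $\Supp_R M=V(\Ann_R M)$ is specialization-closed. Any specialization-closed subset of $\Phi$ lies in $\Phi^{\circ}_{\mathrm{sp}}$, and conversely $\Phi^{\circ}_{\mathrm{sp}}\subseteq\Phi$. Together these give \eqref{eq:finite-trace}.

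\textbf{Step 2: the torsion part.} Over a noetherian ring every specialization-closed subset is Thomason. Hence $\rho_R(\C_\Phi)=\W_V$ with $V=\Phi^{\circ}_{\mathrm{sp}}$. Its support is $V$ itself, because $R/\mathfrak p$ lies in $\W_V$ for every $\mathfrak p\in V$.

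The filter $\mathcal F$ attached to $\W_V$ consists of the ideals $I$ with $V(I)\subseteq V$, so $V_{\mathcal F}=V$. Formula \eqref{eq:torsion-support} then gives \eqref{eq:torsion-part}.

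\textbf{Step 3: the generation criterion.} By Lemma~\ref{lem:large-adjunction}, generation holds iff $\C_\Phi=\lambda_R\rho_R(\C_\Phi)$. The two directions are as follows.
\begin{itemize}
\item If $\Phi$ is specialization-closed, then $\Phi=\Phi^\circ_{\mathrm{sp}}$. For an arbitrary module $X$ one has $\supp_R X\subseteq\Supp_R X$, and the specialization closures of the two coincide; I would cite Foxby's result, as used in Krause. Therefore $\supp_R X\subseteq\Phi$ iff $\Supp_R X\subseteq\Phi$, and the two classes agree.
\item Conversely, suppose the classes agree. For $\mathfrak p\in\Phi$, the module $E(R/\mathfrak p)$ satisfies $\supp_R E(R/\mathfrak p)=\{\mathfrak p\}$, so it lies in $\C_\Phi$. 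Its support $\Supp_R E(R/\mathfrak p)$ is all of $V(\mathfrak p)$, because $E(R/\mathfrak p)_{\mathfrak q}\neq 0$ for $\mathfrak q\supseteq\mathfrak p$. Membership in the right-hand side of \eqref{eq:torsion-part} therefore forces $V(\mathfrak p)\subseteq\Phi^\circ_{\mathrm{sp}}\subseteq\Phi$, so $\Phi$ is specialization-closed.
\end{itemize}

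\textbf{Step 4: injectivity of $\rho_R$.} By Krause's bijection, $\rho_R$ is injective on $\Wide_{\oplus}(\Mod R)$ iff distinct coherent subsets $\Phi$ have distinct $\Phi^\circ_{\mathrm{sp}}$.
\begin{itemize}
\item If $R$ is Artinian, $\Spec R$ is finite and discrete, so every subset is specialization-closed and $\Phi^\circ_{\mathrm{sp}}=\Phi$. Thus $\rho_R$ is injective.
\item If $R$ is not Artinian, choose primes $\mathfrak p\subsetneq\mathfrak q$. I would exhibit a coherent, non-specialization-closed set with the same $\Phi^\circ_{\mathrm{sp}}$ as a specialization-closed one. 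The candidate is a generization-closed set, which Krause shows is coherent. Take $\Phi=\{\mathfrak r:\mathfrak r\subseteq\mathfrak p\}$, which contains a minimal prime $\mathfrak p_0$ below $\mathfrak p$. Then $\Phi^\circ_{\mathrm{sp}}=\varnothing$, since $V(\mathfrak r)\ni\mathfrak q\notin\Phi$ for every $\mathfrak r\in\Phi$. Yet $\Phi\ne\varnothing$, so $\C_\Phi\ne0=\C_\varnothing$ while both restrict to $0$.
\end{itemize}

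\textbf{Main obstacle.} The delicate steps are citing correctly that generization-closed subsets are coherent, and the support comparison $\supp\subseteq\Supp$ with equal specialization closures. I expect both to come from Krause's paper, namely that $\C_\Phi$ for generization-closed $\Phi$ is a localizing subcategory that is closed under products, and from Foxby's lemma on small supports.
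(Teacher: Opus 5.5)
Your proposal is correct. Steps~1--2 coincide with the paper's argument, but Steps~3 and~4 take different routes.

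\textbf{Step 3 (generation criterion).} The paper argues structurally. By Lemma~\ref{lem:large-adjunction}, generation by finitely presented objects means $\C_\Phi$ is a finite-type hereditary torsion class. Finite type is automatic over a noetherian ring, and Krause's Corollary~3.6 matches the hereditary torsion classes $\C_\Phi$ with the specialization-closed $\Phi$.

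You instead test $\C_\Phi=\lambda_R\rho_R(\C_\Phi)$ directly against \eqref{eq:torsion-part}. In one direction you use Foxby's comparison. This is valid for modules over noetherian $R$, since $\Ass_R X\subseteq\supp_R X\subseteq\Supp_R X$ and $\Supp_R X$ is the specialization closure of $\Ass_R X$. In the other direction you use one test module with small support $\{\mathfrak p\}$ and large support $V(\mathfrak p)$; $\kappa(\mathfrak p)$ serves as well as $E(R/\mathfrak p)$.

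This trades Krause's Corollary~3.6 and the finite-type fact for the support comparison. The paper's version is shorter and explains the criterion conceptually; yours is more self-contained.

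\textbf{Step 4 (non-injectivity).} The paper uses the singleton $\{\mathfrak p\}$ with $\mathfrak p$ nonmaximal, which is coherent by Krause's Corollary~4.4. You use $\{\mathfrak r:\mathfrak r\subseteq\mathfrak p\}$ instead.

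Your choice removes the obstacle you flagged. Consider the class of modules on which every $s\notin\mathfrak p$ acts bijectively, i.e.\ $\Mod R_{\mathfrak p}$ (this is your $\C_\Phi$, though you do not need that). It is visibly nonzero, wide and closed under sums. A finitely generated module in it is zero when $\mathfrak p$ is not maximal: for each maximal $\mathfrak m$, pick $s\in\mathfrak m\setminus\mathfrak p$ and apply Nakayama. Hence $\rho_R(\Mod R_{\mathfrak p})=0=\rho_R(0)$ without citing Krause's coherence results.
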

\begin{proof}
For $M\in\modcat R$, small and ordinary supports coincide by \cite[Section~3, p.~737]{Krause}. Moreover, $\Supp_R M=V(\Ann_R M)$ is closed, and hence specialization-closed. Thus $\Supp_R M\subseteq\Phi$ is equivalent to $\Supp_R M\subseteq\Phi^{\circ}_{\mathrm{sp}}$, proving~\eqref{eq:finite-trace}. The modules $R/\mathfrak p$ for $\mathfrak p\in\Phi^{\circ}_{\mathrm{sp}}$ show that the union of these supports is $\Phi^{\circ}_{\mathrm{sp}}$. The support description in Lemma~\ref{lem:torsion} then gives~\eqref{eq:torsion-part}.

Krause's classification identifies hereditary torsion classes with specialization-closed subsets \cite[Corollary~3.6]{Krause}. Over a noetherian ring these torsion classes are of finite type, so Lemma~\ref{lem:large-adjunction} gives the generation criterion.

If $R$ is Artinian, every subset of $\Spec R$ is specialization-closed. Hence $\lambda_R\rho_R(\C)=\C$ for every $\C\in\Wide_{\oplus}(\Mod R)$, and $\rho_R$ is injective. Conversely, a non-Artinian noetherian ring has a nonmaximal prime $\mathfrak p$. The singleton $\{\mathfrak p\}$ is coherent \cite[Corollary~4.4]{Krause}, and $\C_{\{\mathfrak p\}}\ne0$ because it contains $\kappa(\mathfrak p)$. However, $\{\mathfrak p\}^{\circ}_{\mathrm{sp}}=\varnothing$, so~\eqref{eq:finite-trace} gives $\rho_R(\C_{\{\mathfrak p\}})=0=\rho_R(0)$. Thus $\rho_R$ is not injective.
\end{proof}

For example, Krause's classification applied to the coherent singleton $\{(0)\}\subseteq\Spec\mathbb Z$ gives $\C_{\{(0)\}}=\Mod\mathbb Q$. Its finitely presented part is zero by~\eqref{eq:finite-trace}, although it contains $\mathbb Q$. It does not contain the submodule $\mathbb Z\subseteq\mathbb Q$, so it is not Serre. Here
\[
 \supp_{\mathbb Z}\mathbb Q=\{(0)\},\quad
 \Supp_{\mathbb Z}\mathbb Q=\Spec\mathbb Z.
\]
This illustrates both the distinction between the two supports and the loss of information under $\rho_R$.

\begin{example}\label{ex:boundaries}
Let $A=\mathbb C[x_1,x_2,\ldots]$ and $\mathfrak m=(x_1,x_2,\ldots)$. The ring $A$ is coherent by \cite[Example~4.61(a)]{Lam}, whereas $\{\mathfrak m\}$ is the standard non-Thomason subset in \cite[Section~2]{GPtorsion}. The nonzero hereditary torsion class
\[
 \mathcal U=\{X\in\Mod A:\Supp_A X\subseteq\{\mathfrak m\}\}
\]
is therefore not of finite type, by \cite[Theorem~2.2]{GPtorsion}. Moreover, $\mathcal U\cap\modcat A=0$: the support of a nonzero finitely presented module is a nonempty Thomason subset, so it cannot be contained in $\{\mathfrak m\}$. Thus the finite-type hypothesis is necessary for reconstruction even among hereditary torsion classes.
\end{example}

\appendix
\section{Associated primes and finite presentation}\label{app:associated-primes}

For a commutative noetherian ring, Takahashi's classification identifies subsets of the spectrum with subcategories of finitely generated modules closed under subobjects and extensions \cite[Theorem~4.1]{Takahashi}. The following observation describes the obstruction to this parametrization over a coherent ring.

Let $R$ be a commutative coherent ring, let $\mathfrak S(R)$ be the lattice of full subcategories of $\modcat R$ closed under subobjects and extensions, and let $\mathcal P(E)$ denote the power set of a set $E$. Set
\[
 \begin{aligned}
 \Spec_{\mathrm{fg}}R&=\{\mathfrak p\in\Spec R:\mathfrak p\text{ is finitely generated}\},\\
 \Phi_R(S)&=\{M\in\modcat R:\Ass_R M\subseteq S\}
 \end{aligned}
\]
for $S\subseteq\Spec R$.

\Needspace{9\baselineskip}
\begin{proposition}\label{prop:ass-boundary}
The map $\Phi_R:\mathcal P(\Spec R)\to\mathfrak S(R)$ is well-defined, and
\begin{equation}\label{eq:visible-primes}
 \bigcup_{M\in\modcat R}\Ass_R M=\Spec_{\mathrm{fg}}R.
\end{equation}
For all $S,T\subseteq\Spec R$,
\begin{equation}\label{eq:ass-order}
 \Phi_R(S)\subseteq\Phi_R(T)
 \quad\Longleftrightarrow\quad
 S\cap\Spec_{\mathrm{fg}}R\subseteq T\cap\Spec_{\mathrm{fg}}R.
\end{equation}
Consequently, $\Phi_R$ is injective if and only if $R$ is noetherian, equivalently if and only if $\Phi_R$ is a complete lattice isomorphism. In the noetherian case its inverse is $\C\mapsto\bigcup_{M\in\C}\Ass_R M$.
\end{proposition}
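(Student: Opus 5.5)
The plan is to verify the four assertions in turn, using only the definition of $\Ass_R$ as primes of the form $\Ann_R(x)$, Proposition~\ref{prop:finite-submodules}, and, in the noetherian case, Takahashi's classification \cite[Theorem~4.1]{Takahashi}.

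\emph{Well-definedness.} If $L\subseteq M$, then every $\Ann_R(x)$ with $0\ne x\in L$ is also an annihilator of a nonzero element of $M$. Hence $\Ass_R L\subseteq\Ass_R M$, and $\Phi_R(S)$ is closed under subobjects in $\modcat R$. For an exact sequence $0\to L\to M\to N\to0$ in $\modcat R$, I will prove $\Ass_R M\subseteq\Ass_R L\cup\Ass_R N$ by the standard argument. Let $\mathfrak p=\Ann_R(x)$, so that $Rx\cong R/\mathfrak p$ is a domain and every nonzero element of $Rx$ has annihilator exactly $\mathfrak p$. If $Rx\cap L\ne0$, any nonzero element of the intersection shows $\mathfrak p\in\Ass_R L$. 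Otherwise $Rx$ maps injectively into $N$, and the image of $x$ has annihilator $\mathfrak p$, so $\mathfrak p\in\Ass_R N$. This gives extension closure and no noetherian hypothesis is needed. The zero module lies in $\Phi_R(S)$, so it is nonempty.

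\emph{Equation~\eqref{eq:visible-primes} and \eqref{eq:ass-order}.} For $\mathfrak p=\Ann_R(x)$ with $0\ne x\in M\in\modcat R$, Proposition~\ref{prop:finite-submodules} shows that $Rx\cong R/\mathfrak p$ is finitely presented. By \cite[Proposition~4.26(b)]{Lam}, $\mathfrak p$ is then finitely generated. Conversely, if $\mathfrak p$ is finitely generated, then $R/\mathfrak p\in\modcat R$, and since $R/\mathfrak p$ is a domain, $\Ass_R(R/\mathfrak p)=\{\mathfrak p\}$. For \eqref{eq:ass-order}, the implication from right to left is immediate: \eqref{eq:visible-primes} gives $\Ass_R M\subseteq S\cap\Spec_{\mathrm{fg}}R\subseteq T$ for every $M\in\Phi_R(S)$. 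For the implication from left to right, take $\mathfrak p\in S\cap\Spec_{\mathrm{fg}}R$. Then $R/\mathfrak p\in\Phi_R(S)\subseteq\Phi_R(T)$, so $\mathfrak p\in T$.

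\emph{Consequences.} By \eqref{eq:ass-order}, $\Phi_R$ is injective exactly when $S\mapsto S\cap\Spec_{\mathrm{fg}}R$ is injective on $\mathcal P(\Spec R)$, that is, when $\Spec_{\mathrm{fg}}R=\Spec R$. If some prime $\mathfrak p$ is not finitely generated, then $\Phi_R(\{\mathfrak p\})=\Phi_R(\varnothing)$, so injectivity fails. By Cohen's theorem, every prime is finitely generated if and only if $R$ is noetherian.

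In the noetherian case, \eqref{eq:ass-order} makes $\Phi_R$ an order embedding, so it remains to prove surjectivity. An order-embedding bijection is a complete lattice isomorphism, and a complete lattice isomorphism is in particular injective, which closes the loop of equivalences. Surjectivity, together with the formula $\C\mapsto\bigcup_{M\in\C}\Ass_R M$ for the inverse, is exactly the content of Takahashi's theorem \cite[Theorem~4.1]{Takahashi}, which I will cite. One checks directly that this formula recovers $S$, using $\Ass_R(R/\mathfrak p)=\{\mathfrak p\}$ as above.

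The main obstacle is precisely this surjectivity statement, that every subcategory closed under subobjects and extensions is determined by its associated primes. Should a self-contained argument be preferred to the citation, I would use prime filtrations of finitely generated modules over a noetherian ring together with injective hulls; everything else in the proof is elementary.
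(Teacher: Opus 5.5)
Your proposal is correct and follows the paper's proof essentially step for step: the test modules $R/\mathfrak p$ for \eqref{eq:ass-order}, Cohen's criterion for injectivity, and Takahashi's Theorem~4.1 for surjectivity and for the inverse in the noetherian case. The only differences are cosmetic. You prove the associated-prime inclusions for submodules and extensions directly instead of citing Epstein--Shapiro, and you get finite generation of $\Ann_R(x)$ from Proposition~\ref{prop:finite-submodules} applied to $Rx\cong R/\mathfrak p$, whereas the paper gets it directly from coherence of the kernel of $r\mapsto rx$.
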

\begin{proof}
The standard inclusions for associated primes under submodules and short exact sequences show that $\Phi_R(S)$ is closed under subobjects and extensions \cite[Proposition~2.3(2),(3)]{ES}.

To prove~\eqref{eq:visible-primes}, let $\mathfrak p=\Ann_R(x)\in\Ass_R M$ with $M\in\modcat R$. The kernel of the map $R\to M$, $r\mapsto rx$, is finitely generated by coherence, so $\mathfrak p\in\Spec_{\mathrm{fg}}R$. Conversely, if $\mathfrak p\in\Spec_{\mathrm{fg}}R$, then $R/\mathfrak p\in\modcat R$ and $\Ass_R(R/\mathfrak p)=\{\mathfrak p\}$ \cite[Proposition~2.3(1)]{ES}.

If $\Phi_R(S)\subseteq\Phi_R(T)$, testing on $R/\mathfrak p$ for $\mathfrak p\in S\cap\Spec_{\mathrm{fg}}R$ gives $\mathfrak p\in T$. Conversely, if $S\cap\Spec_{\mathrm{fg}}R\subseteq T\cap\Spec_{\mathrm{fg}}R$, then~\eqref{eq:visible-primes} gives
\[
 \Ass_R M\subseteq S\cap\Spec_{\mathrm{fg}}R\subseteq T
 \quad(M\in\Phi_R(S)),
\]
so $\Phi_R(S)\subseteq\Phi_R(T)$. This proves~\eqref{eq:ass-order}.

It follows that $\Phi_R$ is injective exactly when every prime ideal is finitely generated: a prime outside $\Spec_{\mathrm{fg}}R$ gives $\Phi_R(\{\mathfrak p\})=\Phi_R(\varnothing)$. Cohen's criterion identifies this condition with noetherianness \cite[p.~3017]{LR}. In that case Takahashi's classification cited above gives the complete lattice isomorphism and its inverse.
\end{proof}

For the non-finitely generated maximal ideal $\mathfrak m$ of Example~\ref{ex:boundaries}, Proposition~\ref{prop:ass-boundary} gives $\Phi_A(\{\mathfrak m\})=\Phi_A(\varnothing)$.

\section*{Acknowledgments}
This work was supported by NSFC (No. 12471036) and Hubei
Provincial Natural Science Foundation of China (No. 2026AFA094).


\begin{thebibliography}{99}

\bibitem{AHS}
L.~Angeleri H\"ugel and F.~Sentieri,
\emph{Wide coreflective subcategories and torsion pairs},
J. Algebra \textbf{664} (2025), 164--205.
\href{https://doi.org/10.1016/j.jalgebra.2024.10.025}{doi:10.1016/j.jalgebra.2024.10.025}.

\bibitem{AP}
S.~Asai and C.~Pfeifer,
\emph{Wide subcategories and lattices of torsion classes},
Algebr. Represent. Theory \textbf{25} (2022), no.~6, 1611--1629.
\href{https://doi.org/10.1007/s10468-021-10079-1}{doi:10.1007/s10468-021-10079-1}.

\bibitem{Conde}
T.~Conde,
\emph{$\Delta$-filtrations and projective resolutions for the Auslander--Dlab--Ringel algebra},
Algebr. Represent. Theory \textbf{21} (2018), no.~3, 605--625.
\href{https://doi.org/10.1007/s10468-017-9730-z}{doi:10.1007/s10468-017-9730-z}.

\bibitem{DGI}
W.~Dwyer, J.~P.~C.~Greenlees and S.~Iyengar,
\emph{Finiteness in derived categories of local rings},
Comment. Math. Helv. \textbf{81} (2006), no.~2, 383--432.
\href{https://doi.org/10.4171/CMH/56}{doi:10.4171/CMH/56}.

\bibitem{ES}
N.~Epstein and J.~Shapiro,
\emph{Strong Krull primes and flat modules},
J. Pure Appl. Algebra \textbf{218} (2014), no.~9, 1712--1729.
\href{https://doi.org/10.1016/j.jpaa.2014.01.009}{doi:10.1016/j.jpaa.2014.01.009}.

\bibitem{GP}
G.~Garkusha and M.~Prest,
\emph{Classifying Serre subcategories of finitely presented modules},
Proc. Amer. Math. Soc. \textbf{136} (2008), no.~3, 761--770.
\href{https://doi.org/10.1090/S0002-9939-07-08844-2}{doi:10.1090/S0002-9939-07-08844-2}.

\bibitem{GPtorsion}
G.~Garkusha and M.~Prest,
\emph{Torsion classes of finite type and spectra},
in \emph{K-Theory and Noncommutative Geometry},
EMS Ser. Congr. Rep., European Math. Soc., Z\"urich, 2008, pp.~393--412.
\href{https://doi.org/10.4171/060-1/12}{doi:10.4171/060-1/12}.

\bibitem{Glaz}
S.~Glaz,
\emph{Commutative Coherent Rings},
Lecture Notes in Math., vol.~1371, Springer-Verlag, Berlin, 1989.
\href{https://doi.org/10.1007/BFb0084570}{doi:10.1007/BFb0084570}.

\bibitem{HHS}
J.~Herzog, T.~Hibi and D.~I.~Stamate,
\emph{The trace of the canonical module},
Israel J. Math. \textbf{233} (2019), 133--165.
\href{https://doi.org/10.1007/s11856-019-1898-y}{doi:10.1007/s11856-019-1898-y}.

\bibitem{Hopkins}
M.~J.~Hopkins,
\emph{Global methods in homotopy theory},
in \emph{Homotopy Theory (Durham, 1985)},
London Math. Soc. Lecture Note Ser., vol.~117, Cambridge University Press, Cambridge, 1987, pp.~73--96.
\href{https://doi.org/10.1017/CBO9781107325746.005}{doi:10.1017/CBO9781107325746.005}.

\bibitem{Hovey}
M.~Hovey,
\emph{Classifying subcategories of modules},
Trans. Amer. Math. Soc. \textbf{353} (2001), no.~8, 3181--3191.
\href{https://doi.org/10.1090/S0002-9947-01-02747-7}{doi:10.1090/S0002-9947-01-02747-7}.

\bibitem{IMST}
K.~Iima, H.~Matsui, K.~Shimada and R.~Takahashi,
\emph{When is a subcategory Serre or torsion-free?},
Publ. Res. Inst. Math. Sci. \textbf{60} (2024), no.~4, 831--857.
\href{https://doi.org/10.4171/PRIMS/60-4-7}{doi:10.4171/PRIMS/60-4-7}.

\bibitem{IK}
O.~Iyama and Y.~Kimura,
\emph{Classifying subcategories of modules over Noetherian algebras},
Adv. Math. \textbf{446} (2024), Article 109631, 62 pp.
\href{https://doi.org/10.1016/j.aim.2024.109631}{doi:10.1016/j.aim.2024.109631}.

\bibitem{Kanda}
R.~Kanda,
\emph{Classification of categorical subspaces of locally noetherian schemes},
Doc. Math. \textbf{20} (2015), 1403--1465.
\href{https://doi.org/10.4171/DM/522}{doi:10.4171/DM/522}.

\bibitem{Krause}
H.~Krause,
\emph{Thick subcategories of modules over commutative noetherian rings}
(with an appendix by S.~Iyengar),
Math. Ann. \textbf{340} (2008), no.~4, 733--747.
\href{https://doi.org/10.1007/s00208-007-0166-3}{doi:10.1007/s00208-007-0166-3}.

\bibitem{Lam}
T.~Y.~Lam,
\emph{Lectures on Modules and Rings},
Graduate Texts in Math., vol.~189, Springer-Verlag, New York, 1999.
\href{https://doi.org/10.1007/978-1-4612-0525-8}{doi:10.1007/978-1-4612-0525-8}.

\bibitem{LR}
T.~Y.~Lam and M.~L.~Reyes,
\emph{A Prime Ideal Principle in commutative algebra},
J. Algebra \textbf{319} (2008), 3006--3027.
\href{https://doi.org/10.1016/j.jalgebra.2007.07.016}{doi:10.1016/j.jalgebra.2007.07.016}.

\bibitem{MS}
F.~Marks and J.~\v{S}\v{t}ov\'{i}\v{c}ek,
\emph{Torsion classes, wide subcategories and localisations},
Bull. Lond. Math. Soc. \textbf{49} (2017), no.~3, 405--416.
\href{https://doi.org/10.1112/blms.12033}{doi:10.1112/blms.12033}.

\bibitem{Neeman}
A.~Neeman,
\emph{The chromatic tower for $D(R)$}
(with an appendix by M.~B\"okstedt),
Topology \textbf{31} (1992), no.~3, 519--532.
\href{https://doi.org/10.1016/0040-9383(92)90047-L}{doi:10.1016/0040-9383(92)90047-L}.

\bibitem{SW}
D.~Stanley and B.~Wang,
\emph{Classifying subcategories of finitely generated modules over a Noetherian ring},
J. Pure Appl. Algebra \textbf{215} (2011), no.~11, 2684--2693.
\href{https://doi.org/10.1016/j.jpaa.2011.03.013}{doi:10.1016/j.jpaa.2011.03.013}.

\bibitem{Stenstrom}
B.~Stenstr\"om,
\emph{Rings of Quotients: An Introduction to Methods of Ring Theory},
Grundlehren Math. Wiss., vol.~217, Springer-Verlag, Berlin, 1975.
\href{https://doi.org/10.1007/978-3-642-66066-5}{doi:10.1007/978-3-642-66066-5}.

\bibitem{Takahashi}
R.~Takahashi,
\emph{Classifying subcategories of modules over a commutative noetherian ring},
J. Lond. Math. Soc. (2) \textbf{78} (2008), no.~3, 767--782.
\href{https://doi.org/10.1112/jlms/jdn056}{doi:10.1112/jlms/jdn056}.

\bibitem{TWZ}
L.~Tan, D.~Wang and T.~Zhao,
\emph{Wide subcategories in abelian categories},
Front. Math. \textbf{21} (2026), no.~1, 203--214.
\href{https://doi.org/10.1007/s11464-021-0489-5}{doi:10.1007/s11464-021-0489-5}.

\bibitem{Thomason}
R.~W.~Thomason,
\emph{The classification of triangulated subcategories},
Compositio Math. \textbf{105} (1997), no.~1, 1--27.
\href{https://doi.org/10.1023/A:1017932514274}{doi:10.1023/A:1017932514274}.

\bibitem{Weibel}
C.~A.~Weibel,
\emph{An Introduction to Homological Algebra},
Cambridge Studies in Adv. Math., vol.~38, Cambridge University Press, Cambridge, 1994.
\href{https://doi.org/10.1017/CBO9781139644136}{doi:10.1017/CBO9781139644136}.

\end{thebibliography}
\end{document}